%%%%%%%%%%%%%%%%%%%%%%%%%%%%%%%%%%%%%%%%%%%%%%%%%%%%%%%

\documentclass[a4paper,12pt,reqno]{amsart}
\usepackage{mathrsfs}
\usepackage{amsmath,amssymb,latexsym,amsfonts,amscd}
\usepackage{showlabels}
\usepackage{longtable}
%%%%%%%%%%%%%%%%%%%%%%%%%%%%%%%%%%%%%%%%%%%%%%%%%%%%%%%%%%%%%%%%%
\title{On the canonical volume of 3-folds of general type with $P_{12}\geq 2$}
\author{Lingzi Hou}
%\date{}

\address{\rm School of Mathematical Sciences, Fudan University,
Shanghai 200433, China}
\email{10300180105@fudan.edu.cn}

\thanks{The author was
supported by  National Talents Training Base for Basic Research and Teaching of Natural Science of China, No.J1103105}
%%%%%%%%%%%%%%%%%%%%%%%%%%%%%%%%%%%%%%%%%%%%%%%%%%%%%%%%%%%%%
\numberwithin{equation}{section}

\newcommand\Vol{\text{\rm Vol}}

\newcommand\tki{\tilde{\chi}}

\newtheorem{thm}{Theorem}[section]
\newtheorem{lem}[thm]{Lemma}
\newtheorem{cor}[thm]{Corollary}

\theoremstyle{definition}
\newtheorem{defn}[thm]{Definition}

\newtheorem{rem}[thm]{Remark}
\theoremstyle{remark}

%%%%%%%%%%%%%%%%%%%%%%%%%%%%%%%%%%%%%%%%%%%%%%%%%%
\begin{document}
\begin{abstract}
Let $V$ be a nonsingular projective 3-fold of general type.  When the pluricanonical section index $\delta(V)>12$, Chen-Chen \cite{Chen3} has a complete list of the possibility for the weighted basket ${\mathbb B}(V)$. However the possibility of ${\mathbb B}(V)$ could be infinite in the situation $\delta(V)\leq 12$, which is the main challenge to the classification. In this paper we mainly study the case with $\delta(V)=12$ and show that $\Vol(V)\geq \frac{31}{48048}$, which improves the corresponding result of Chen--Chen \cite[Prop. 4.9(5)]{Chen3}. 
\end{abstract}

%%%%%%%%%%%%%%%%%%%%
\maketitle
%%%%%%%%%
\pagestyle{myheadings} \markboth{\hfill Lingzi Hou
\hfill}{\hfill On the canonical volume of threefolds\hfill}
%%%%%%%%%%%%%%%%%%%%%%%%%%%%%%%%%%

%\tableofcontents

%{\bf Notations}

%\begin{tabbing}
% \= aaaaaaaaaaaaaaaaaaaa\= bbbbbbbbbbbbbbbbbbbbbbbbbbbbbbb \kill
%\> $Y$    \> a nonsingular projective variety of general type\\
%\> $V$  \> a nonsingular projective 3-fold of general type\\
%\>$X$ \> a minimal projective 3-fold of general type\\
%\> $\Vol(V)$, $K^3$ \> the canonical volume\\
%\> $\varphi_m=\Phi_{|mK|}$ \> pluricanonical maps\\
%\> $P_m(V)$, $P_{m}(X)$ \> plurigenus of $V$, $X$\\
%\> $\pi:X'\rightarrow X$ \> nonsingular birational modification\\
%\> $m_0$ \> minimal positive integer with $P_{m_0}\geq 2$ \\
%\> $m_1$ \> minimal positive integer with $P_m>0$ for\\
%\> \> all $m\geq m_1$\\
%\> B in Sections 2, 3 \> the base of the induced fibration
%from $\varphi_{m_0}$\\
%\>B, in Sections 4 $\sim$ 7 \> a basket, usually a geometric
%basket\\
%\> $\rounddown{\cdot}$ \> round-down, taking the integral part\\
%\>$\roundup{\cdot}$ \> means $-\rounddown{-\cdot}$\\
%\end{tabbing}

\section{\bf Introduction}
One of the fundamental problems in birational geometry is to study the distribution of discrete birational invariants of varieties in question.  Among those birational invariants, the canonical volume plays the key role. Given a nonsingular projective $n$-fold $Y$, the canonical volume 
$$\Vol(Y):= \text{lim sup}_{n\in {\mathbb Z}^+}\frac{n!\ h^0(Y, mK_Y)}{m^n}.$$ 

The remarkable theorem, proved by Hacon-McKernan \cite{H-M}, Takayama \cite{Tak} and Tsuji \cite{Tsuji}, says that there exists a constant $v_n>0$ so that $\Vol(Y)\geq v_n$ for
any $n\geq 3$. However the constant $v_n$ is not explicitly known except that one has  $v_3\geq \frac{1}{1680}$, which  was proved   by Chen-Chen \cite{Chen1, Chen2, Chen3}.  Restricting our interest to 3-folds,  we are absorbed by  those beautiful calculations of Chen--Chen. Thanks to the guidance of Meng Chen, I am able to consider one of the boundary case here that, when $P_{12}\geq 12$, the possibility for the weighted basket could be infinite and thus it is necessary to slightly extend the method developed by Chen--Chen to find the lower bound of $\Vol(Y)$.    

To be precise, we always consider a nonsingular projective 3-fold $V$ of general type. Pick up any minimal model $X$ of $V$. The so-called weighted basket $${\mathbb B}(V):={\mathbb B}(X)=\{B_X, \chi(\mathcal{O}_X), P_2(X)\}$$
where $B_X$ is the Reid basket (see \cite{YPG}).  Recall from Chen--Chen \cite{Chen3} that the pluricanonical section index 
$$\delta(V)=\delta(X):=\text{min}_{m\in {\mathbb Z}^+}\{m|P_m(X)\geq 2\}.$$
By Chen--Chen \cite{Chen1}, one has $1\leq \delta(V)\leq 18$.  So far, ${\mathbb B}(V)$ has been completely classified when $\delta(V)>12$ (see Chen-Chen \cite[Theorem 1.4]{Chen3}).  When $\delta(V)\leq 12$, though Chen--Chen has given effective lower bounds for $K_X^3$, it is interesting to see if those are optimal. It is with this motivation that we are going to study the case $\delta(V)=12$ in this paper. Our main result is the following:
 
\begin{thm}\label{HLZ}
Let $V$ be a nonsingular projective 3-fold of general
type with $P_{12}\geq 2$. Then \begin{itemize}
\item[(1)] When $P_{12}=2$, $\mathbb{B}(V)$ imust be one of the cases listed in 4.3 and 5.3;

\item[(2)] $Vol(V)\geq\frac{31}{48048}$ and the equality holds
 if and only if $\mathbb{B}(V)=\{B_H, 4,0\}$ where 
 {\small $$B_H=\{9\times(1,2),(7,16),(3,7),2\times (5,13),5\times (1,3),(2,7),(3,11),(1,4)\}.$$}
\end{itemize}
\end{thm}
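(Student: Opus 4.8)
The plan is to run the Reid orbifold Riemann--Roch machinery of Chen--Chen. First I would reduce to $\delta(V)=12$: since $P_{12}\ge 2$ only says $\delta(V)\le 12$, the cases $\delta(V)\le 11$ are covered by the sharper bounds Chen--Chen establish in those (higher-volume) ranges, so it suffices to treat $\delta(V)=12$, i.e. $P_m\le 1$ for all $2\le m\le 11$. For a minimal model $X$ of $V$ and $m\ge 2$ one has $P_m=\chi(\OO_X(mK_X))$ by Kawamata--Viehweg vanishing, and Reid's formula gives
\begin{equation*}
P_m=(1-2m)\,\chi(\OO_X)+\frac{m(m-1)(2m-1)}{12}\,K_X^3+l_B(m),
\end{equation*}
where $l_B(m)=\sum_{(b,r)\in B_X}\sum_{j=1}^{m-1}\frac{\overline{jb}\,(r-\overline{jb})}{2r}$ and $\overline{jb}$ is the residue of $jb$ modulo $r$. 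Thus, once $B_X$ and $\chi(\OO_X)$ are fixed, each $P_m$ is an explicit affine function of the single unknown $K_X^3$, and imposing $P_{12}=2$ solves for the volume,
\begin{equation*}
K_X^3=\frac{2+23\,\chi(\OO_X)-l_B(12)}{253}.
\end{equation*}
The whole problem is then a linear optimization over admissible pairs $(B_X,\chi(\OO_X))$.

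Next I would pin down the feasibility region. Because $\delta(V)=12$ forces $P_m\in\{0,1\}$ for $2\le m\le 11$, each such $m$ contributes the affine constraints $0\le P_m\le 1$, which after substitution tie $l_B(m)$, $\chi(\OO_X)$ and $l_B(12)$ together; combined with $K_X^3>0$ and Chen--Chen's a priori bounds on $\chi(\OO_X)$ in this range, these cut down the candidates and, after a split according to the possible profiles of the small plurigenera, produce the lists of 4.3 and 5.3, proving part (1). The structural consequence I want from them is that the constraints force $l_B(12)$ to grow essentially linearly with $\chi(\OO_X)$, so the numerator $2+23\,\chi(\OO_X)-l_B(12)$ cannot be driven negative and $K_X^3$ stays bounded below.

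The genuine difficulty, as the introduction warns, is that the admissible baskets need not form a finite set: even with all the $P_m\le 1$ imposed, infinite families survive (for instance with an unbounded number of index-$2$ points compensated by a growing $\chi(\OO_X)$), and moreover the minimizer $\{B_H,4,0\}$ sits in the interior of the packing order rather than at an extreme, so one cannot simply test maximally packed baskets. To handle this I would exploit Chen--Chen's packing/unpacking calculus, tracking how replacing $(b_1,r_1),(b_2,r_2)$ by $(b_1+b_2,r_1+r_2)$ changes each $l_B(m)$, and then set up the minimization as a linear program over the family parameters, the binding constraints among $\{P_m\le 1\}_{m\le 11}$ pinning the minimizer family by family. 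The crux — and where I expect the heaviest bookkeeping — is to control uniformly, across all families at once, the competition between the growth of $l_B(12)$ and that of $\chi(\OO_X)$, so as to certify that no family escapes the common lower bound and that equality is attained only in the interior candidate.

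Finally I would evaluate $K_X^3=\frac{2+23\,\chi(\OO_X)-l_B(12)}{253}$ on the finitely many extremal candidates surviving from 4.3 and 5.3 and verify that the minimum is $\tfrac{31}{48048}$, the denominator $48048=\operatorname{lcm}(2,3,4,7,11,13,16)$ being exactly the least common multiple of the indices occurring in $B_H$, attained only by $\{B_H,4,0\}$ (where indeed $\chi(\OO_X)=4$ and $P_2=0$). The residual case $P_{12}\ge 3$ is disposed of by the same formula: the larger constant term in the numerator already yields a strictly larger lower bound, so it does not interfere with the extremal case, completing part (2).
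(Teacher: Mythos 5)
Your general framework (Reid's Riemann--Roch plus Chen--Chen's packing calculus, followed by a finite search) is the same one the paper uses, but your proposal leaves unproved exactly the step that makes the theorem nontrivial: a certified reduction of the a priori infinite set of weighted baskets to a finite, explicitly enumerable list. You appeal to ``Chen--Chen's a priori bounds on $\chi(\OO_X)$ in this range,'' but no such bound exists when $\delta=12$ --- the possible unboundedness of $\chi(\OO_X)$ is precisely why this case was open. The paper manufactures the bound itself: after proving $P_2=0$ and that $n^0_{1,r}=0$ for all $r\geq 7$ with $n^0_{1,6}\in\{0,1\}$ (Lemmas 3.1--3.3), it writes the coefficients $n^{(n)}_{b,r}$ of the canonical unpackings $B^{(7)},\dots,B^{(12)}$ as explicit affine functions of $\chi$, $P_3,\dots,P_{13}$ and the prime-packing counters $\eta,\zeta,\alpha,\beta$, and then extracts $\chi\leq 5$ from $\epsilon_{10}+\epsilon_{12}\geq 0$, together with $\eta\leq\chi+4$, $\zeta\leq\chi+4$, $\alpha\leq\zeta$, $\beta\leq 3$, $P_{13}\leq-\chi+6$ (and the analogues $\chi\leq 3$, etc., in the case $n^0_{1,6}=1$) from non-negativity of individual coefficients. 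Only these inequalities make $\{B^{(12)}\}$ finite, so that the computer enumeration can output the $149+2$ classes and, for each, all minimal positive packings, from which the minimum $\frac{31}{48048}$ is read off. Your sentence about controlling ``uniformly, across all families at once, the competition between the growth of $l_B(12)$ and that of $\chi(\OO_X)$'' is an acknowledgment of this crux, not a solution of it; as written, part (1) is assumed rather than derived, and part (2) rests entirely on part (1).

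Two further assertions are incorrect as stated. First, the minimizer $\{B_H,4,0\}$ does not ``sit in the interior of the packing order'': since $K^3$ only decreases under packing and the geometric basket $B(X)$ is dominated by $B^{(12)}$, the sharp lower bound is computed precisely over minimal positive baskets (maximally packed subject to $K^3>0$), and the extremal case 122.c in the table is such a basket --- so testing extremal packings is exactly the correct and sufficient procedure. Second, your disposal of $P_{12}\geq 3$ via ``the larger constant term in the numerator'' is fallacious: the feasible sets of pairs $(B_X,\chi(\OO_X))$ for $P_{12}=2$ and for $P_{12}\geq 3$ are different, so their optima cannot be compared coefficient by coefficient; the paper instead quotes Chen--Chen's inequalities for $P_{12}\geq 3$, which give $\Vol(X)\geq\frac{1}{1224}>\frac{31}{48048}$, and only then runs its own analysis on the remaining case $P_{12}=2$.
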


Theorem \ref{HLZ} has slightly improved the lower bound $\frac{1}{1560}$ obtained by Chen--Chen \cite[Proposition 4.9(5)]{Chen3}.

We briefly explain the idea of this paper. Assume $P_{12}\geq 2$. Though $\{{\mathbb B}(V)|\delta(V)=12\}$ might be an infinite set, we may consider two sub-cases:(1) $P_{12}\geq 3$; (2) $P_{12}=2$.  The first case is better since Chen--Chen's inequality in \cite{Chen3}  can be applied to estimate the lower bound of $K_X^3$. In the second case, we first deduce that $P_2=0$ and $n^0_{1,6}=0,1$. By Chen--Chen's formulae in \cite{Chen1}, we can see that $\chi(\mathcal{O}_X)$ is bounded from above. Thus, theoretically, this is a computable problem. In fact, for the situation $n_{1,6}^0=0$, we may use Chen-Chen's formula to realize the calculation.  When $n_{1,6}^0=1$, known formula is not useful for us and we need to deduce the expression for $B^{(n)}$ instead. Fortunately, the result is favorable as there are very few outputs. So we could eventually conclude our main statement.

Essentially affected by Chen--Chen's work, we more or less keep the same notation and concepts as in \cite{Chen1, Chen2, Chen3}. 
\medskip

Hereby I would like to express my sincere gratitude to professor Meng Chen who guided me to this topic and helped me a lot in completing this paper. 

\section{\bf Preliminaries}

%I will introduce some basic concepts and theories before giving
%my result. Chen-Chen \cite{Chen1} (P368-P370, P374-375) provide a reference
%for the main formula.
We recall the invariants and structure of baskets developed by Chen--Chen in \cite{Chen1}. 

\subsection{\bf The basket of terminal quotient singularities and packings}
\begin{defn}
By a 3-dimensional terminal quotient singularity $Q=\frac{1}{r}(1,-1,b)$, we mean one which is analytically isomorphic to the quotient of ($\mathbb{C}^3$, 0) by
a cyclic group action $\varepsilon$:
$\varepsilon(x,y,z)=(\varepsilon{x},\varepsilon^{-1}{y},\varepsilon^bz)$, where
 $r$ is a positive integer, $\varepsilon$ is a fixed $r$-th
 primitive root of 1, the integer $b$ is coprime to $r$ and $0<b<r$. By
 replacing $\varepsilon$ with another primitive root of 1 and changing
 the ordering of coordinates, we may and do assume that $0<b\leq\frac{r}{2}$.
\end{defn}

\begin{defn}
{\it A basket} $B$ (of terminal orbifolds) is a collection (allowing weights) of
terminal quotient singularities of type $\frac{1}{r_i}(1,-1,b_i)$, $i\in{I}$ where $I$
is a finite index set. For simplicity, we will always denote a terminal quotient
singularity $\frac{1}{r}(1,-1,b)$ simply by the pair $(b,r)$ when no confusion is likely. So we will write a basket as: $B=\{m_i\times(b_i,r_i)|i=1,2,\ldots,t\}=\{(b_j,r_j)|j\in{J}\}, m_i\in\mathbb{Z^{+}}$, where each $m_i$ denotes
the weight. A generalized basket means a collection of pairs of integers $(b,r)$ with $0<b<r$, but not necessary requiring $b$, $r$ to be coprime each other.
\end{defn}

\begin{defn}
Given a generalized basket $(b, r)$ with $b\leq\frac{r}{2}$ and a fixed integer $n>0$.
Let $\delta:=\lfloor\frac{bn}{r}\rfloor$.
Then $\frac{\delta+1}{n}>\frac{b}{r}\geq\frac{\delta}{n}$.
We define
$\Delta^n(b,r):=\delta{bn}-\frac{(\delta^2+\delta)}{2}r,
~~\sigma(B):=\underset{i\in{I}}{\sum}b_i~,~\sigma'(B):=
\underset{i\in{I}}{\sum}\frac{b_i^2}{r_i}$. 
\end{defn}

One can see that $\Delta^n(b,r)$ is a non-negative integer.
For a generalized basket $B=\{(b_i,r_i)|i\in{I}\}$ and a
fixed $n>0$, we define $\Delta^n(B)=\underset{i\in{I}}{\sum}\Delta^n(B_i)$ where $B_i=\{(b_i,r_i)\}$.

%\subsection{Packing}

\begin{defn}
Given a basket $$B=\{(b_1,r_1), (b_2,r_2), (b_3,r_3), \ldots, (b_t,r_t)\}, $$we
call the basket $B'=\{(b_1+b_2, r_1+r_2), (b_3, r_3), \ldots, (b_t,r_t)\}$ {\it a packing}
of $B$ (and $B$ is an unpacking of $B'$ ), written as $B\succeq{B'}$. If, furthermore, $b_1r_2-b_2r_1=1$, we call $B\succ{B'}$ {\it a prime packing}. \end{defn}

\begin{lem} (Chen--Chen \cite[Lemma 2.8]{Chen1}) 
Let $B\succeq{B'}$ be any packing between generalized baskets. Keep the same notation as above. Then
\begin{itemize}
\item[(1)]$\Delta^n(B)\geq\Delta^n(B')$ for all $n\geq2$;
\item[(2)]$\Delta^n(B)=\Delta^n(B')\Leftrightarrow\exists$ $\delta\geq0, \delta\in\mathbb{Z}^+$ so that $\frac{b_1}{r_1},\frac{b_2}{r_2}\in[\frac{\delta}{n},\frac{\delta+1}{n}]$
\item[(3)]$\sigma(B)=\sigma(B'); \sigma'(B)=\sigma'(B')+\frac{(r_1b_2-r_2b_1)^2}{r_1r_2(r_1+r_2)}$
\end{itemize}
\end{lem}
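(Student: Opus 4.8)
The plan is to reduce everything to a single packing step. Since a packing replaces only the entries $(b_1,r_1),(b_2,r_2)$ by the mediant $(b_1+b_2,r_1+r_2)$ while leaving $(b_3,r_3),\dots,(b_t,r_t)$ fixed, and since $\Delta^n$, $\sigma$, $\sigma'$ are each additive over the entries of a basket by definition, the contributions of the untouched entries cancel in every comparison between $B$ and $B'$. Thus (1) and (2) reduce to comparing $\Delta^n(b_1,r_1)+\Delta^n(b_2,r_2)$ with $\Delta^n(b_1+b_2,r_1+r_2)$, the identity $\sigma(B)=\sigma(B')$ is immediate, and the $\sigma'$ part of (3) reduces to the two-term identity $\frac{b_1^2}{r_1}+\frac{b_2^2}{r_2}-\frac{(b_1+b_2)^2}{r_1+r_2}=\frac{(r_1b_2-r_2b_1)^2}{r_1r_2(r_1+r_2)}$, which I would verify directly by clearing denominators: the numerator over $r_1r_2(r_1+r_2)$ expands to $(b_1r_2-b_2r_1)^2$.

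The heart of the matter is a closed-form reformulation of $\Delta^n$. Writing $\delta=\rounddown{bn/r}$, I claim $\Delta^n(b,r)=\sum_{k\ge1}\max\{nb-kr,\,0\}$; indeed $nb-kr\ge0$ exactly for $1\le k\le\delta$, so the sum equals $\sum_{k=1}^{\delta}(nb-kr)=\delta nb-\frac{\delta(\delta+1)}{2}r$, which is precisely the definition of $\Delta^n(b,r)$. Granting this, (1) becomes termwise: setting $x=nb_1-kr_1$ and $y=nb_2-kr_2$, one has $\max\{x,0\}+\max\{y,0\}\ge\max\{x+y,0\}$ since the left side is at least both $x+y$ and $0$. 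Summing over $k\ge1$ yields $\Delta^n(b_1,r_1)+\Delta^n(b_2,r_2)\ge\Delta^n(b_1+b_2,r_1+r_2)$, which is (1).

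For (2) I would determine when every term of the sum is an equality. The inequality $\max\{x,0\}+\max\{y,0\}\ge\max\{x+y,0\}$ is strict exactly when $x$ and $y$ have strictly opposite signs, and is an equality otherwise (both $\ge0$, both $\le0$, or one equal to $0$). Since $x=r_1(nb_1/r_1-k)$ and $y=r_2(nb_2/r_2-k)$, a strict term arises precisely when some integer $k$ lies strictly between $nb_1/r_1$ and $nb_2/r_2$. Hence $\Delta^n(B)=\Delta^n(B')$ holds iff the open interval with endpoints $nb_1/r_1$ and $nb_2/r_2$ contains no integer, which is equivalent to the existence of an integer $\delta\ge0$ with $\frac{b_1}{r_1},\frac{b_2}{r_2}\in[\frac{\delta}{n},\frac{\delta+1}{n}]$; this is exactly (2).

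The step demanding the most care is establishing the closed form $\Delta^n(b,r)=\sum_{k\ge1}\max\{nb-kr,0\}$ together with the boundary bookkeeping in (2): I must treat the $k=\delta$ term, which vanishes when $nb/r$ is an integer, so that a fraction lying exactly on an endpoint $\frac{\delta}{n}$ or $\frac{\delta+1}{n}$ still registers as an equality, consistent with the closed intervals in the statement. Once the closed form is in hand, the subadditivity of $\max\{\cdot,0\}$ carries out the rest essentially for free.
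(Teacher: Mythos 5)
Your proof is correct, and it is worth noting that the paper you are being compared against contains no proof at all: this lemma is recalled verbatim from Chen--Chen \cite[Lemma 2.8]{Chen1} as background material, so the only comparison available is with the original argument there. Chen--Chen's proof is a direct floor-function computation: setting $\delta_i=\lfloor nb_i/r_i\rfloor$ and $\delta=\lfloor n(b_1+b_2)/(r_1+r_2)\rfloor$, one uses that the mediant fraction lies between $b_1/r_1$ and $b_2/r_2$ (so $\delta$ is sandwiched between $\delta_1$ and $\delta_2$) and expands $\Delta^n(B)-\Delta^n(B')$ into two bracketed terms of the form $(\delta_i-\delta)\bigl(nb_i-\tfrac{\delta_i+\delta+1}{2}r_i\bigr)$, each checked to be nonnegative, with the equality analysis read off from that case division. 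Your route replaces this bookkeeping by the closed form $\Delta^n(b,r)=\sum_{k\ge 1}\max\{nb-kr,0\}$, which you verify correctly (the nonzero terms are exactly $1\le k\le\lfloor nb/r\rfloor$, giving $\delta nb-\tfrac{\delta^2+\delta}{2}r$), after which (1) is termwise subadditivity of $\max\{\cdot,0\}$ and (2) falls out of the exact equality criterion for that subadditivity: a strict term occurs precisely when some integer $k$ separates $nb_1/r_1$ from $nb_2/r_2$, which you correctly translate into the existence of a common interval $[\delta/n,(\delta+1)/n]$, including the boundary cases where $nb_i/r_i$ is an integer. The additivity reduction to a single pair and the two-term identity for $\sigma'$ in (3) are also right. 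What your approach buys is transparency: the inequality and its equality case come from one structural fact rather than a case analysis, and the same closed form immediately explains why $\Delta^n$ behaves like a count of lattice conditions; what it costs is having to state and prove the reformulation, which Chen--Chen's computation avoids.
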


\begin{lem} (Chen--Chen \cite[Lemma 2.11]{Chen1}) 
 Let $B=\{(b_1,r_1),(b_2,r_2)\}\succeq{B'}=\{(b_1+b_2,r_1+r_2)\}$
 and $b_1r_2-b_2r_1=1$, set $n=r_1+r_2$. Then $\Delta^n(B)=\Delta^n(B')+1$.
\end{lem}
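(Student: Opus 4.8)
The plan is to prove the identity by a direct computation that reduces everything to the closed form of $\Delta^n$ together with two floor values pinned down by the prime-packing hypothesis. First I would record the reinterpretation that, for a pair $(b,r)$ with $\delta=\lfloor bn/r\rfloor$, the definition $\Delta^n(b,r)=\delta bn-\frac{\delta^2+\delta}{2}r$ is exactly the telescoping sum $\sum_{j=1}^{\delta}(bn-jr)$; since each summand is non-negative for $j\le\delta$, this re-confirms $\Delta^n\ge 0$ and makes the later algebra transparent.

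The heart of the argument is the evaluation of the three relevant floor indices when $n=r_1+r_2$. For the packed pair $B'=\{(b_1+b_2,\,n)\}$ one has $\delta'=\lfloor (b_1+b_2)n/n\rfloor=b_1+b_2$, so $\Delta^n(B')=\frac{n\,s(s-1)}{2}$ with $s:=b_1+b_2$. For the two original pairs I would insert the prime-packing hypothesis in the forms $b_1r_2=b_2r_1+1$ and $b_2r_1=b_1r_2-1$: writing $b_1n/r_1=b_1+b_1r_2/r_1=b_1+b_2+1/r_1$ and $b_2n/r_2=b_2+b_2r_1/r_2=b_1+b_2-1/r_2$, and using $r_1,r_2\ge 2$ (so $0<1/r_i<1$), I obtain $\delta_1=\lfloor b_1n/r_1\rfloor=s$ and $\delta_2=\lfloor b_2n/r_2\rfloor=s-1$. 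This step is the crux of the lemma: it is precisely the determinant condition $b_1r_2-b_2r_1=1$ that produces the fractional parts $1/r_1$ and $-1/r_2$, and hence forces $\delta_2$ to be one less than $\delta_1$.

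With the three indices in hand I would substitute into the closed form. Summing $\Delta^n(b_1,r_1)=sb_1n-r_1\frac{s(s+1)}{2}$ and $\Delta^n(b_2,r_2)=(s-1)b_2n-r_2\frac{(s-1)s}{2}$, the linear terms collapse using $sb_1+(s-1)b_2=s^2-b_2$ and $r_1(s+1)+r_2(s-1)=sn+(r_1-r_2)$, giving $\Delta^n(B)=\frac{ns^2}{2}-nb_2-\frac{s(r_1-r_2)}{2}$. Subtracting $\Delta^n(B')=\frac{ns^2}{2}-\frac{ns}{2}$ and simplifying $\frac{ns}{2}-\frac{s(r_1-r_2)}{2}=sr_2$ leaves $\Delta^n(B)-\Delta^n(B')=sr_2-nb_2=(b_1+b_2)r_2-(r_1+r_2)b_2=b_1r_2-b_2r_1$, which equals $1$ by hypothesis.

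I expect no serious obstacle beyond the bookkeeping; the only point requiring genuine care is the evaluation of $\delta_1$ and $\delta_2$, where I must justify the strict inequalities $0<1/r_i<1$ from $r_i\ge 2$ (which holds because $0<b_i<r_i$ forces $r_i\ge 2$). I would also note, for consistency with the standing convention $b\le r/2$ under which $\Delta^n$ is defined, that $b_1\le r_1/2$ and $b_2\le r_2/2$ give $b_1+b_2\le(r_1+r_2)/2$, so the packed pair remains in the intended range throughout the computation.
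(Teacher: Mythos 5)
Your proof is correct. Note that the paper you are working from does not actually prove this lemma: it is quoted as a known result from Chen--Chen \cite[Lemma 2.11]{Chen1}, so there is no internal argument to compare against, and your computation is essentially the standard one behind the cited source. You correctly identify the crux: the determinant condition $b_1r_2-b_2r_1=1$ pins down the three floors, namely $\lfloor b_1n/r_1\rfloor=b_1+b_2$ and $\lfloor b_2n/r_2\rfloor=b_1+b_2-1$ (valid since $0<b_i<r_i$ forces $r_i\geq 2$), while $\lfloor (b_1+b_2)n/n\rfloor=b_1+b_2$; after substituting into $\Delta^n(b,r)=\delta bn-\frac{\delta^2+\delta}{2}r$, the difference collapses exactly to $sr_2-nb_2=b_1r_2-b_2r_1=1$, as your algebra shows. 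Your closing remark that $b_1+b_2\leq (r_1+r_2)/2$ keeps the packed pair within the convention $b\leq r/2$ is a worthwhile sanity check that the cited statement leaves implicit.
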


%\subsection{The structure of Baskets---Canonical Sequence}

Recall that, in Chen--Chen \cite[2.13]{Chen1}, an operator ${\mathscr B}^{(n)}(\cdot)$ was defined to obtain the canonical sequence $\{{\mathscr B}^{(n)}(B)\}$ of the given basket $B$.  Roughly speaking,  ${\mathscr B}^{(n)}(B)$ is obtained by unpacking $B$ down to the level ``n''.  For $n>0$, set $B^{(n)}:={\mathscr B}^{(n)}(B)$. So have the canonical sequence:
$$B^{(0)}=\cdots=B^{(4)}\succeq B^{(5)}\succeq\cdots\overset{\alpha_n}\succeq B^{(n)}\overset{\alpha_{n+1}}\succeq\cdots\succeq B.$$
Define $\epsilon_n(B)$ to be the total number of prime packings belong to the packing process $\alpha_n$.

\begin{lem} (see Chen--Chen \cite[Claim B in 2.12, Lemma 2.16]{Chen1}) 
\begin{itemize}
\item[(1)] $
B^{(n-1)}=\mathscr{B}^{(n-1)}(\mathscr{B}^{(n)}(B))\succeq\mathscr{B}^{(n)}(B)=B^{(n)}
$
for all $n\geq1$.
\item[(2)]
$\Delta^j(B^{(0)})=\Delta^j(B)$ for $j=3,4$.
\item[(3)]
$\Delta^j(B^{(n-1)})=\Delta^j(B^{(n)})$ for $j<n$
\item[(4)]
$\Delta^n(B^{(n-1)})=\Delta^n(B^{(n)})+\epsilon_n(B)$.
\item[(5)]
$\Delta^n(B^{(n)})=\Delta^n(B)$.
\end{itemize}
\end{lem}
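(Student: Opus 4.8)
The plan is to reduce all five statements to one structural observation about a single prime packing, and then feed it through the definition of the operator $\mathscr{B}^{(n)}$. First I would attach to each prime packing $\{(b_1,r_1),(b_2,r_2)\}\succeq\{(b_1+b_2,r_1+r_2)\}$ (so that $b_1r_2-b_2r_1=1$) its \emph{level} $m:=r_1+r_2$, the denominator of the mediant. The key claim is:
\[
\text{a level-}m\text{ prime packing preserves }\Delta^j\text{ for every }j<m\text{ and drops }\Delta^m\text{ by exactly }1.
\]
The second half is exactly Lemma 2.11 applied with $n=r_1+r_2$. For the first half I would invoke the Farey/mediant property: since $b_1r_2-b_2r_1=1$, the mediant $\frac{b_1+b_2}{r_1+r_2}$ is the fraction of least denominator lying strictly between $\frac{b_1}{r_1}$ and $\frac{b_2}{r_2}$, so no rational $\frac{k}{j}$ with $j<m$ separates them; hence for each such $j$ the two fractions share a common interval $[\frac{\delta}{j},\frac{\delta+1}{j}]$, and Lemma 2.8(2) gives the $\Delta^j$-invariance.

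Next I would recall from the construction in \cite[2.13]{Chen1} that $\mathscr{B}^{(n)}(B)$ is the unpacking of $B$ obtained by carrying out exactly the prime unpackings of level $>n$ (equivalently, unpacking every pair of denominator $>n$ until all denominators are $\le n$), that $\alpha_n$ consists of the level-$n$ prime packings realizing $B^{(n)}\preceq B^{(n-1)}$, and that $\epsilon_n(B)$ is their number. With the level claim in hand the last four items become bookkeeping. For (5): every prime packing relating $B$ and $B^{(n)}$ has level $>n$, so by the claim (with $j=n$) it preserves $\Delta^n$, and transitivity gives $\Delta^n(B^{(n)})=\Delta^n(B)$. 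For (2): since the smallest level of a nontrivial prime packing of pairs in a genuine basket is $5$ (the mediant of $(1,2)$ and $(1,3)$), every packing between $B$ and $B^{(0)}=B^{(4)}$ has level $\ge5>4$, hence preserves $\Delta^3$ and $\Delta^4$. For (3): the step $\alpha_n$ uses only level-$n$ packings, each of which preserves $\Delta^j$ for $j<n$, so $\Delta^j(B^{(n-1)})=\Delta^j(B^{(n)})$ for such $j$. For (4): those same $\epsilon_n(B)$ packings each drop $\Delta^n$ by exactly one, and summing gives $\Delta^n(B^{(n-1)})=\Delta^n(B^{(n)})+\epsilon_n(B)$.

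Item (1) is the consistency of the operator rather than a $\Delta$-computation: $\mathscr{B}^{(n)}(B)$ performs the level-$>n$ unpackings of $B$, and applying $\mathscr{B}^{(n-1)}$ afterwards performs in addition the level-$n$ ones; since $\{\text{level}>n\}\cup\{\text{level}=n\}=\{\text{level}>n-1\}$, the composite performs precisely the level-$>(n-1)$ unpackings of $B$, i.e. equals $\mathscr{B}^{(n-1)}(B)=B^{(n-1)}$, and $B^{(n-1)}\succeq B^{(n)}$ because the former is obtained from the latter by the extra packings of $\alpha_n$.

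The main obstacle I anticipate is not the $\Delta$-bookkeeping but making ``unpacking $B$ down to level $n$'' rigorous: one must check that the canonical unpacking is \emph{well defined and order-independent}, so that the set of level-$n$ packings separating $B^{(n)}$ from $B^{(n-1)}$ is canonical and $\epsilon_n(B)$ is genuinely intrinsic. I would establish this by a confluence (diamond-lemma) argument: the level $r_1+r_2$ of a prime packing depends only on the two pairs it merges, packings of different levels act on essentially disjoint data and therefore commute, and iterating yields a unique normal form at each level. This confluence is exactly what underlies item (1) and the identification of $\alpha_n$ with the level-$n$ packings, and it is the step demanding the most care; once it is in place, items (2)--(5) follow from the single level claim above together with Lemmas 2.8 and 2.11.
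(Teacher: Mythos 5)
First, a structural remark: the paper you are being compared against does not prove this lemma at all --- it is quoted verbatim from Chen--Chen \cite{Chen1} (Claim B in 2.12 and Lemma 2.16 there), so the only meaningful comparison is with the argument in that source. For items (2)--(5) your proposal is correct and is essentially that argument: your ``level claim'' is exactly the combination of the Farey-neighbour fact (if $b_1r_2-b_2r_1=1$, no fraction of denominator $j<r_1+r_2$ separates $\frac{b_1}{r_1}$ from $\frac{b_2}{r_2}$, so both lie in a common interval $[\frac{\delta}{j},\frac{\delta+1}{j}]$ and Lemma 2.8(2) applies) with Lemma 2.11, and (2)--(5) then follow by summing over the prime packings in the steps $\alpha_m$ and using that the canonical sequence terminates at $B=B^{(N)}$ for $N\gg 0$.

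The genuine gap is in the step you yourself flag, and the repair you propose would fail as stated. Chen--Chen never need a confluence argument: $\mathscr{B}^{(n)}(B)$ is \emph{defined} pair by pair, each $(b,r)$ being replaced by the unique non-negative integral combination of the two elements of $S^{(n)}$ adjacent to $\frac{b}{r}$, so canonicity of $B^{(n)}$, the identification of $\alpha_n$ with level-$n$ prime packings, and item (1) are read off from an explicit formula. Your substitute claim --- ``packings of different levels act on essentially disjoint data and therefore commute'' --- is false: different-level operations are typically nested rather than disjoint (the level-$5$ unpacking $(2,5)\mapsto\{(1,2),(1,3)\}$ can only be performed on the pair created by the level-$8$ unpacking $(3,8)\mapsto\{(2,5),(1,3)\}$), and prime packing as a rewriting system is genuinely non-confluent: from $\{(1,2),(1,3),(1,4)\}$ one may pass to $\{(2,5),(1,4)\}$ or to $\{(1,2),(2,7)\}$, and both are terminal under prime packings, so there is no unique ``normal form at each level'' for arbitrary baskets. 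What actually rescues your diamond-lemma plan is an element-wise fact you never state: a coprime pair $(b,r)$ with $b\geq 2$ admits \emph{exactly one} prime unpacking, namely into its Farey parents (the unique solution of $b_1r-br_1=1$ with $0<b_1<b$), while pairs $(1,r)$ admit none --- which is also why your parenthetical description ``unpack until all denominators are $\leq n$'' is wrong: $(1,6)$ survives in $B^{(5)}$, as the paper's own tables in its second case show. With that uniqueness, the rewriting ``replace a non-unit pair of denominator $>n$ by its prime unpacking'' is deterministic on each element of the multiset, distinct redexes are automatically disjoint, and termination plus Newman's lemma give the canonical normal form; without it, the well-definedness of $B^{(n)}$, the intrinsic meaning of $\epsilon_n(B)$, and item (1) --- everything your proofs of (2)--(5) rest on --- remain unproved.
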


\subsection{The weighted basket}

A weighted basket is a formal triple ${\mathbb B}=\{B, \tilde{\chi}, \tilde{\chi}_2\}$ where $B$ is a generalized basket, $\tki\in {\mathbb Z}$ and $\tilde{\chi}_2\in {\mathbb Z}^+\cup \{0\}$.

We define the Euler characteristic and $K^3$ of a formal
basket formally as follows. First we
define
$$\left\{ \begin{array}{l} \chi_2({\mathbb B}):=\tilde{\chi}_2,\\ \chi_3({\mathbb B}):=-\sigma(B)+ 10
\tilde{\chi}+5\tilde{\chi}_2 \end{array} \right.$$ and the volume
$$\begin{array}{lll} K^3({\mathbb B})&:=&\sigma'({B})-4 \tilde{\chi}
-3\tilde{\chi}_2+\chi_3({\bf
B})\\
&=& -\sigma+\sigma'+6\tilde{\chi}+2\tilde{\chi}_2.
\end{array} 
$$ For $m \geq 4$, the Euler characteristic
$\chi_m({\mathbb B})$  is defined inductively by {\small
$$\chi_{m+1}({\mathbb B})-\chi_m({\mathbb B}):=
\frac{m^2}{2}(K^3({\mathbb B})-\sigma'({B})) +\frac{m}{2}
\sigma({B})-2\tilde{\chi}+\Delta^m(B).$$}

Clearly, by definition, $\chi_m({\mathbb B})$ is an integer for all
$m\geq 4$ because $K^3({\mathbb B})-\sigma'({B})=-4 \tilde{\chi}
-3\tilde{\chi}_2+\chi_3({\mathbb B})$ and $\sigma=10
\tilde{\chi}+5\tilde{\chi}_2-\chi_3({\mathbb B})$ have the same
parity.

Given a $\mathbb{Q}$-factorial terminal 3-fold $X$, one can associate to $X$ a triple ${\mathbb B}(X):=(B,\tilde{\chi}, \tilde{\chi_2}) $ where $B=B(X)$ is Reid's basket,
$\tilde{\chi}=\chi(\mathcal{O}_X)$ and $\tilde{\chi_2}= \chi(\mathcal{O}_X(2K_X))$. It's clear that such a triple is a formal basket. The Euler characteristic  and $K^3$ of the formal basket ${\mathbb B}(X)$ are nothing but
the Euler characteristic  and $K^3$ of the 3-fold $X$.
For simplicity, we denote $\chi_{m}({\mathbb B})$ by $\tilde{\chi}_m$
for all $m\geq 2$. Also denote $K^3({\mathbb B})$ by $\tilde{K}^3$.

\begin{defn} Let ${\mathbb B}:=(B, \tilde{\chi},
\tilde{\chi_2})$ and ${\bf B'}:=(B', \tilde{\chi},
\tilde{\chi_2})$ be two formal baskets.

(1) We say that ${\bf B'}$ is a packing of ${\mathbb B}$ (written as
${\mathbb B}\succ {\bf B'}$) if $B\succ B'$. Clearly ``packing'' between
formal baskets gives a partial ordering.

(2) A formal basket ${\mathbb B}$ is called {\it positive} if $K^3({\bf
B})>0$.

(3) A formal basket ${\mathbb B}$ is said to be minimal positive if it
is positive and minimal with regard to packing relation.
\end{defn}

In the case that $X$ is a minimal projective 3-fold of general type,  we see that 
$K^3({\mathbb B}(X))=K_X^3$ and $\tki_m=P_m(X)$ for all $m\geq 2$ by the vanishing theorem. 

\subsection{Representation of $B^{(n)}$ in terms of $\chi$ and $P_m$}
Given a minimal projective 3-fold $X$ of general type. Take ${\mathbb B}=\{B(X), \chi, P_2(X)\}$. Then $B^{(0)}$, $B^{(5)}$, $\cdots$, $B^{(12)}$ can be expressed in terms of $\chi$ and $P_m$ ($2\leq m\leq 13$). The readers are requested to check Chen--Chen \cite[P376--P379]{Chen1} for details which are omitted here due to controlling the bulk of this paper.

\section{\bf Proof of the main theorem}

\subsection{Boundedness}

Let $V$ be a nonsingular projective 3-fold of general type.
Pick up any minimal model $X$ of $V$.  Set $${\mathbb B}(X)=\{B(X), \chi(\mathcal{O}_X), P_2(X)\}.$$
We would like to prove that $\{{\mathbb B}(X)|P_2(X)=2\}$ is a finite set. 

\begin{lem}\label{b1} If $P_m\leq1$ for all $m\leq 11$ and $P_{12}=2$, then $P_2=0$. 
\end{lem}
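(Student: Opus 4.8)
The plan is to argue by contradiction: assume $P_2=1$ and exploit that $\delta(X)=12$ forces $P_m\le 1$ for every $m\le 11$ while $P_{12}=2$. I would first package the hypotheses into the recursion of \S2.2. Writing $\sigma:=\sigma(B)=10\chi+5P_2-P_3$ and using $K^3-\sigma'(B)=-4\chi-3P_2+P_3$, the Euler-characteristic recursion becomes, for each $m\ge 3$, the identity $\Delta^m(B)=(P_{m+1}-P_m)+E_m$, where $E_m:=\chi(2m-1)(m-2)+P_2\tfrac{m(3m-5)}{2}-P_3\tfrac{m(m-1)}{2}$. Every $\Delta^m(B)$ is thus expressed through $\chi$ and the (small) plurigenera, and this is where the hypothesis will bite.

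Next I would bring in the level-$0$ basket. Since $B^{(0)}$ is a maximal unpacking it consists only of points $(1,r)$, and $\sigma$ is packing-invariant, so $\sigma(B^{(0)})=\sigma$. From $\Delta^3(B)=\Delta^3(B^{(0)})=n^0_{1,2}$ and $\Delta^4(B)=\Delta^4(B^{(0)})=2n^0_{1,2}+n^0_{1,3}$ combined with the identity above I solve $n^0_{1,2}=5\chi+6P_2-4P_3+P_4$ and $n^0_{1,3}=4\chi+2P_2+2P_3-3P_4+P_5$, whence $\sum_{r\ge 4}n^0_{1,r}=\chi-3P_2+P_3+2P_4-P_5$.

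The heart of the proof is the level-$6$ estimate. On one side, $\Delta^6(1,2)=6$, $\Delta^6(1,3)=3$ and $\Delta^6(1,r)\le 2$ for $r\ge 4$, so the packing inequality $\Delta^6(B)\le\Delta^6(B^{(0)})$ gives $\Delta^6(B)\le 6n^0_{1,2}+3n^0_{1,3}+2\sum_{r\ge4}n^0_{1,r}$. On the other side $\Delta^6(B)=E_6+(P_7-P_6)\ge E_6-1$ because $P_6\le 1$ and $P_7\ge 0$. Substituting the formulae of the previous step and using $P_2=1$, the two bounds are compared; remarkably all terms in $\chi$ cancel and the only surviving necessary condition is $P_4+P_5-P_3\ge 2$. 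Hence, unconditionally in $\chi$, the assumption $P_2=1$ already forces $(P_3,P_4,P_5)=(0,1,1)$.

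It remains to exclude this single profile, which I expect to be the main obstacle. Here all the inequalities above become equalities: $B^{(0)}=\{(5\chi+7)(1,2),\,4\chi\,(1,3),\,(\chi-2)(1,4)\}$ is completely pinned down, one gets $P_6=1$, $P_7=0$, and $K^3\le\sigma'(B^{(0)})-4\chi-3=\tfrac{\chi}{12}$; in particular $\chi\ge 2$. Moreover $\Delta^5(B)=E_5$ is forced to sit exactly $2\chi+1$ below $\Delta^5(B^{(0)})$, so the chain $B^{(0)}\succeq B$ must contain $\epsilon_5=2\chi+1$ prime packings $(1,2)+(1,3)\to(2,5)$, each lowering $\sigma'$ by $\tfrac1{30}$ while preserving $\Delta^6$; feeding this into $K^3=\sigma'(B)-4\chi-3>0$ already squeezes $K^3\le\tfrac{\chi-2}{60}$, killing $\chi=2$ outright. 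For $\chi\ge 3$ the same bookkeeping must be iterated: the forced discrepancies of $\Delta^7,\dots,\Delta^{11}$ dictate further prime packings at levels $7,\dots,11$, each costing a definite amount of $\sigma'$, and combining these with $P_{12}=2$ is what finally drives $K^3\le 0$. The delicate point is the \emph{simultaneous} control of all $\Delta^m$ under the packing; for this I would invoke the explicit description of $B^{(5)},\dots,B^{(12)}$ in Chen--Chen \cite{Chen1}, reducing the surviving profile to a finite verification. That verification, rather than any single inequality, is the real work, after which $P_2=0$ follows.
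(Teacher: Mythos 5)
Your first steps are correct and in fact reconstruct, from first principles, exactly the inequality the paper imports from Chen--Chen: your formula $\Delta^m(B)=(P_{m+1}-P_m)+E_m$, the coefficients $n^0_{1,2}=5\chi+6P_2-4P_3+P_4$, $n^0_{1,3}=4\chi+2P_2+2P_3-3P_4+P_5$, $\sum_{r\ge4}n^0_{1,r}=\chi-3P_2+P_3+2P_4-P_5$, and the level-$6$ comparison all check out, and they do force $P_3=0$, $P_4=P_5=1$ (and, keeping the $P_6,P_7$ terms you dropped, also $P_6=1$, $P_7=0$) under the assumption $P_2=1$. This is precisely the content of the paper's one displayed formula $\epsilon_6=-3P_2-P_3+P_4+P_5+P_6-P_7-\epsilon\ge 0$.

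The genuine gap is what comes next. Having reached $P_2=1$, $P_5=1$, $P_7=0$, the proof is one line from finished, and you miss that line: on the minimal model $X$, the product of a nonzero section of $H^0(2K_X)$ with a nonzero section of $H^0(5K_X)$ is a nonzero section of $H^0(7K_X)$, so $P_2=P_5=1$ forces $P_7\ge 1$, contradicting $P_7=0$. That multiplication-of-sections fact is the paper's entire conclusion, and note that it is a \emph{geometric} input about $X$, not a formal-basket identity. Instead of this, you launch a purely numerical packing/$\sigma'$ bookkeeping scheme which, as written, only disposes of $\chi=2$: your own bound $K^3\le\frac{\chi-2}{60}$ is strictly positive for $\chi\ge 3$, and the promised iteration over the discrepancies of $\Delta^7,\dots,\Delta^{11}$ is asserted rather than carried out (``that verification \dots is the real work''). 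A reader cannot complete the argument from what you wrote, so the case $\chi\ge 3$ remains open in your proposal; whether the purely formal route can be pushed to a contradiction is plausible but unestablished, and it is in any case unnecessary once the section-multiplication inequality $P_{m+n}\ge 1$ for $P_m,P_n\ge 1$ is invoked.
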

\begin{proof} We shall repeatedly use those formulae in \cite[P376--P379]{Chen1} and keep the same notations as there.

{}First of all, we have
$$
\epsilon_6=-3P_2-P_3+P_4+P_5+P_6-P_7-\epsilon=0.
$$
Clearly we have $P_2\leq 1$.  If $P_2=1$, then $P_4=P_5=P_6=1$. It follows that $P_3=P_7=\epsilon=0$. But this is impossible since $P_2=P_5=1$ implies $P_7\geq1$. So $P_2=0$.
\end{proof}

So we have  the following equalities:
\begin{equation}
\begin{cases}
\tau:=\sigma'-K^3=4{\chi}-P_3\\
\sigma=10{\chi}-P_3\\
\Delta^3=5{\chi}-4P_3+P_4\\
\Delta^4=14{\chi}-6P_3-P_4+P_5\\
\Delta^5=27{\chi}-10P_3-P_5+P_6\\
\Delta^6=44{\chi}-15P_3-P_6+P_7\\
\Delta^7=65{\chi}-21P_3-P_7+P_8\\
\Delta^8=90{\chi}-28P_3-P_8+P_9\\
\Delta^9=119{\chi}-36P_3-P_9+P_{10}\\
\Delta^{10}=152{\chi}-45P_3-P_{10}+P_{11}\\
\Delta^{11}=189{\chi}-55P_3-P_{11}+P_{12}\\
\Delta^{12}=230{\chi}-66P_3-P_{12}+P_{13}
\end{cases}
\end{equation}

By Reid's Riemann-Roch formula, if $\chi(\mathcal{O}_X)\leq 0$, then $P_3\geq 2$ (contradicting to our assumption  of Theorem \ref{HLZ}).  By Chen--Chen \cite[Corollary3.13]{Chen2},  if $\chi(\mathcal{O}_X)=1$, then $P_{10}\geq 2$. 
Thus we may always assume $\chi(\mathcal{O}_X)>1$ from now on. 

\begin{lem}\label{b2}
If $P_m\leq1$ for all $~m\leq 11$ and $ P_{12}=2$, then $n_{1,r}^0=0$ for all $r\geq 7$.
\end{lem}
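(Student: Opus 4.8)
The plan is to read off the whole basket $B^{(0)}$ explicitly and then eliminate the large-$r$ atoms. First I would record that a pair $(1,r)$ admits no prime unpacking, since one cannot write $1=b_1+b_2$ with $b_1,b_2\geq 1$; hence the canonical sequence never touches a $(1,r)$, whereas any $(b,r)$ with $b\geq 2$ gets unpacked at the level equal to its own $r$. Consequently $B^{(0)}=\mathscr{B}^{(4)}(B)$ contains only atoms $(1,r)$ with $r\geq 2$, and I write $B^{(0)}=\{n^0_{1,r}\times(1,r)\}_{r\geq 2}$. The proof then reduces to showing $n^0_{1,r}=0$ for $r\geq 7$.

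Next I would exploit the linearity $\Delta^n(B^{(0)})=\sum_{r\geq 2}n^0_{1,r}\,\Delta^n(1,r)$ together with the elementary facts $\Delta^n(1,r)=0$ for $r\geq n$ and $\Delta^n(1,n-1)=1$. These make the relations triangular: $\Delta^3(B^{(0)})$ returns $n^0_{1,2}$, $\Delta^4(B^{(0)})$ returns $n^0_{1,3}$, and in general $\Delta^{n}(B^{(0)})$ returns $n^0_{1,n-1}$ once the lower entries are in hand. By Lemma 2.7 one has $\Delta^{n}(B^{(0)})=\Delta^n(B)$ for $n=3,4$ and $\Delta^{n}(B^{(0)})=\Delta^n(B)+\epsilon_n$ for $n\geq 5$; feeding in the formulae (3.1) for $\Delta^n(B)$ and the expressions for $\epsilon_n$ recalled in \S2.3, each $n^0_{1,r}$ with $2\leq r\leq 11$ becomes an explicit $\mathbb{Z}$-linear combination of $\chi$ and $P_2,\dots,P_{13}$. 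Substituting $P_2=0$, $P_m\leq 1$ for $m\leq 11$, $P_{12}=2$ and $\chi\geq 2$, I would then check directly that $n^0_{1,r}=0$ for $7\leq r\leq 11$.

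The essential remaining point is the tail $r\geq 12$: since $\Delta^n(1,r)=0$ for every $r\geq n$, these atoms are invisible to all $\Delta^n$ with $n\leq 12$, so the triangular system is silent about them. Here I would use the one global invariant at my disposal, namely the packing-invariant $\sigma(B^{(0)})=\sigma(B)=10\chi-P_3$ (Lemma 2.5(3) and (3.1)). From $\sum_{r\geq 2}n^0_{1,r}=\sigma(B^{(0)})$ together with the values just computed, one finds that the determined entries $\sum_{r=2}^{11}n^0_{1,r}$ already exhaust the total $10\chi-P_3$, whence $\sum_{r\geq 12}n^0_{1,r}=0$; as each $n^0_{1,r}$ is a non-negative integer, all of them vanish. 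I expect this tail to be the main obstacle, precisely because it lies beyond the reach of the linear $\Delta$-relations and can only be controlled by the global $\sigma$-identity, with the positivity $K^3>0$ available as a backstop should the numerology need tightening. The one genuinely technical nuisance along the way is the bookkeeping of the packing numbers $\epsilon_n$, which the strong hypothesis $P_m\leq 1$ fortunately keeps small and manageable.
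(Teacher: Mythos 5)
Your argument breaks at its first load-bearing step: the identity $\Delta^n(B^{(0)})=\Delta^n(B)+\epsilon_n$ is true only for $n=5$. What Lemma 2.7 actually gives is $\Delta^n(B^{(n-1)})=\Delta^n(B)+\epsilon_n$, and $\Delta^n(B^{(0)})$ is in general strictly larger than $\Delta^n(B^{(n-1)})$, because prime packings performed at the intermediate levels $5,\dots,n-1$ also decrease $\Delta^n$; Lemma 2.7(3) only says that a level-$m$ packing preserves $\Delta^j$ for $j<m$, not for $j>m$. Concretely, the level-$5$ packing $\{(1,2),(1,3)\}\succ\{(2,5)\}$ drops $\Delta^7$ from $9+5=14$ to $13$. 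So your triangular system does not close: the gap $\Delta^n(B^{(0)})-\Delta^n(B)$ is not $\epsilon_n$ but a sum of contributions depending on \emph{which} prime packings occur at each level, and these are exactly the extra unknowns ($\eta$, $\zeta$, $\alpha$, $\beta$, and Chen--Chen's $\epsilon$, $\sigma_5$, $R$) appearing throughout Section 3; they are not functions of $\chi$ and the $P_m$. Note also that the explicit $\epsilon_n$-formulas displayed in this paper are derived under the case hypotheses of Corollary 3.4, which sits downstream of this very lemma, so quoting them here would be circular; even in the proof of Lemma 3.1 the formula for $\epsilon_6$ carries the unknown $\epsilon$.

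The tail argument collapses for the same reason. The identity $\sigma(B^{(0)})=\sigma(B)=10\chi-P_3$ is not an extra weapon: in Chen--Chen's parametrization it is precisely the equation that determines $n^0_{1,4}$, namely $n^0_{1,4}=\sigma-n^0_{1,2}-n^0_{1,3}-\sigma_5$ with $\sigma_5=\sum_{r\geq 5}n^0_{1,r}$, so it constrains only the sum $n^0_{1,4}+\sigma_5$ and is satisfied no matter what the tail is; trading a copy of $(1,4)$ for a copy of $(1,12)$ leaves $\sigma$, $\Delta^3$, $\Delta^4$ unchanged and merely shifts the unknown $\epsilon_5$, so no combination of your linear identities can force the tail to vanish. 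What actually kills $r\geq 7$ in the paper is non-linear input: assuming $n^0_{1,r}>0$ for some $r\geq 7$ forces $R\geq 6$, which, fed into Chen--Chen's inequality \cite[(3.14)]{Chen1},
$$2P_5+3P_6+P_8+P_{10}+P_{12}\geq \chi+4P_3+P_7+P_{11}+P_{13}+R,$$
together with $\chi\geq 2$, $P_{12}=2$ and $P_m\leq 1$ for $m\leq 11$, pins down $P_3=0$, $P_5=P_6=P_8=P_{10}=P_{11}=1$, $P_7=P_{13}=0$; this contradicts the multiplicativity of plurigenera, $P_{13}\geq P_5\cdot P_8=1$. Your proposal invokes no such input (neither plurigenus multiplication nor the non-negativity inequalities behind (3.14)), and without it the conclusion is not reachable from the formal linear relations alone.
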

\begin{proof}
If there exists an integer $ r\geq7$ such that $n_{1,r}^0> 0$, then 
$R\geq 6$ and \cite[(3.14)]{Chen1} implies:
\begin{eqnarray*}
&&2P_5+3P_6+P_8+P_{10}
+P_{12}\\
&\geq&{\chi}+4P_3
+P_7+P_{11}+P_{13}+R.
\end{eqnarray*}
This implies $P_3=0$ and  $P_5=P_6=1$. Thus $P_{11}=1$. 
Eventually we see $P_8=P_{10}=1$ and $P_7=P_{13}=0$
But then $P_{13}\geq{P_5\cdot P_8=1}$ gives a contradiction. 
So we have $n_{1,r}^0=0$ for all $r\geq 7$. 
\end{proof}

%Next, we study about $n_{1,6}^0$.
%We have already known from above that
%$R=2n_{1,5}^0+5n_{1,6}^0$. If $n_{1,6}^0\neq0$, then $R=5$ or $R=7$.

\begin{lem}\label{b3} If $P_m\leq 1$ for all $m\leq11$, $P_{12}=2)$) and $n_{1,6}^0\neq 0$, then $n_{1,5}^0=0$ and $n_{1,6}^0=1$.
\end{lem}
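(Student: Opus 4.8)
The plan is to reuse the very inequality that powered Lemma \ref{b2}, namely Chen--Chen's \cite[(3.14)]{Chen1}. After invoking $P_2=0$ (Lemma \ref{b1}) it takes the form
\[
2P_5+3P_6+P_8+P_{10}+P_{12}\ \geq\ \chi+4P_3+P_7+P_{11}+P_{13}+R,
\]
where $R\geq 0$ is the basket quantity occurring there. The key observation is that the left-hand side is rigidly capped: since $P_5,P_6,P_8,P_{10}\leq 1$ and $P_{12}=2$, it never exceeds $9$. On the other hand the right-hand side is at least $\chi+R\geq 2+R$, because $\chi>1$ and the remaining plurigenera are non-negative. Hence \emph{any} hypothesis forcing $R\geq 8$ is already contradictory, and the whole lemma will reduce to lower-bounding $R$.

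Concretely, I would first record how the points $(1,5)$ and $(1,6)$ feed into $R$. By Lemma \ref{b2} the only points of $B^{(0)}$ with $b=1$ and $r\geq 5$ are of type $(1,5)$ or $(1,6)$. Exactly as the single point $(1,7)$ supplied the bound $R\geq 6=r-1$ used in Lemma \ref{b2}, each $(1,5)$ contributes $4$ and each $(1,6)$ contributes $5$, all other points contributing non-negatively; this yields $R\geq 4\,n_{1,5}^0+5\,n_{1,6}^0$. Now I argue by contradiction, under the standing assumption $n_{1,6}^0\geq 1$. If $n_{1,5}^0\neq 0$, then $R\geq 4+5=9$; if instead $n_{1,6}^0\geq 2$, then $R\geq 10$. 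In either case $R\geq 8$, so the displayed inequality gives $9\geq 2+8=10$, which is absurd. Therefore $n_{1,5}^0=0$ and $n_{1,6}^0=1$.

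The substantive step, and the one I expect to demand the most care, is precisely the bookkeeping that identifies the contribution of each $(1,r)$ to $R$: I must read off from the definition of $R$ in \cite{Chen1} that these contributions are additive, non-negative, and equal to $r-1$ on $(1,5)$ and $(1,6)$, so that a lone $(1,6)$ leaves $R$ below the fatal threshold while any extra $(1,5)$, or a second $(1,6)$, forces $R\geq 8$. Note that for this lemma only the lower bound $R\geq 4\,n_{1,5}^0+5\,n_{1,6}^0$ is needed, so possible contributions from points with $b\geq 2$ can only help and need not be tracked. Once the contribution count is secured the conclusion is immediate; unlike in Lemma \ref{b2}, no delicate forcing of individual plurigenera via the multiplicativity $P_{a+b}\geq P_aP_b$ is required here, since the weaker condition $R\geq 8$ already overshoots the capped left-hand side.
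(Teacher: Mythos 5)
Your proof hinges on the bookkeeping claim that each $(1,5)$ point contributes $4=r-1$ to $R$, and that claim is false: the contribution of $(1,5)$ to the quantity $R$ in Chen--Chen's (3.14) is $2$, not $4$. You can read this off from the paper itself: in Case (i) of Section 3.2 (where $n^0_{1,r}=0$ for all $r\geq 6$) the paper records $R=2n^0_{1,5}$, and the paper's own proof of this lemma uses $R=2n^0_{1,5}+5n^0_{1,6}$. The underlying reason is that (3.14) is the inequality $\epsilon_{10}+\epsilon_{12}\geq 0$ rewritten in terms of plurigenera, and the coefficient of $n^0_{1,r}$ in $R$ is (up to small corrections coming from the $\sigma_5$-dependence of $n^0_{1,4}$, $\epsilon_5$ and $\epsilon_7$) equal to $14-\Delta^{10}(1,r)-\Delta^{12}(1,r)$; these coefficients come out as $2,5,6,8,\ldots$ for $r=5,6,7,8,\ldots$. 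The apparent pattern ``$r-1$'' that you infer from the values at $r=6,7$ is a coincidence and does not extrapolate downward to $r=5$ (nor upward to $r=8$).

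This error is fatal rather than cosmetic. With the correct coefficients, the critical case $n^0_{1,5}=1$, $n^0_{1,6}=1$ gives only $R=7$, so your threshold inequality reads $9\geq \chi+R\geq 2+7=9$, which is perfectly consistent: the ``pure counting'' contradiction you rely on (needing $R\geq 8$) never materializes in exactly the case the lemma must exclude. This is why the paper, after noting $R\geq 7$, does not stop there: the inequality then \emph{forces} $P_5=P_6=P_8=P_{10}=1$ and $P_3=P_7=P_{11}=P_{13}=0$ (with $\chi=2$), and the contradiction comes from the multiplicative property $P_{11}\geq P_5\cdot P_6=1$. So the step you explicitly declare unnecessary is in fact indispensable. (Your argument does correctly dispose of the subcase $n^0_{1,6}\geq 2$, since $5n^0_{1,6}\geq 10$ already overshoots; that is how one gets $n^0_{1,6}\leq 1$. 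Only the proof that $n^0_{1,5}=0$ collapses.)
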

\begin{proof} Clearly the definition of $R$ and the above inequality implies $n_{1,6}^0\leq 1$. 

If $n_{1,5}^0\neq 0$, we have 
$$
R=2n_{1,5}^0+5n_{1,6}^0\geq 7.
$$
So we get
$$
2P_5+3P_6+P_8+P_{10}\geq{P_7}+P_{11}+P_{13}+7,
$$
which says 
$P_5=P_6=P_8=P_{10}=1$ and $P_7=P_{11}=P_{13}=0.$
But then $P_{11}\geq P_5\cdot P_6=1$ (a contradiction).
Thus  $n_{1,5}^0=0$.
\end{proof}

Lemma \ref{b1}, Lemma \ref{b2} and Lemma \ref{b3} imply the following:

\begin{cor}\label{c} If $P_m\leq 1$ for all $m\leq11$ and  $P_{12}=2$, then one of the following occurs:
\begin{itemize}
\item[(i)] $n_{1,r}^0=0$ for all $r\geq6$.
\item[(ii)] $n_{1,5}^0=0$, $n_{1,6}^0=1$ and  $n_{1,r}^0=0$ for $r\geq7$.
\end{itemize}
\end{cor}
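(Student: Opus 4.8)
The plan is to obtain the corollary as a direct logical synthesis of the three preceding lemmas, organized around a dichotomy on the single integer $n_{1,6}^0$. First I would record that Lemma \ref{b2} already disposes of all the high-index contributions: under the standing hypotheses $P_m\leq 1$ for $m\leq 11$ and $P_{12}=2$, it gives $n_{1,r}^0=0$ for every $r\geq 7$. Since this vanishing appears verbatim in both of the listed conclusions (i) and (ii), it suffices to analyze only the singularities of index $r=5$ and $r=6$. Implicitly all of this rests on Lemma \ref{b1}, whose conclusion $P_2=0$ is what renders the system of identities for $\tau$, $\sigma$ and the $\Delta^m$ available, and hence is the foundation on which Lemmas \ref{b2} and \ref{b3} are built.

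Next I would split into two exhaustive cases according to whether $n_{1,6}^0$ vanishes. If $n_{1,6}^0=0$, then combining this with the vanishing $n_{1,r}^0=0$ for $r\geq 7$ from Lemma \ref{b2} yields $n_{1,r}^0=0$ for all $r\geq 6$, which is exactly conclusion (i). If instead $n_{1,6}^0\neq 0$, then the hypotheses of Lemma \ref{b3} are met, so it supplies $n_{1,5}^0=0$ and $n_{1,6}^0=1$; together with $n_{1,r}^0=0$ for $r\geq 7$ this is precisely conclusion (ii). The two branches cover all admissible values of $n_{1,6}^0$, so the list is complete and the cases are mutually exclusive.

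Since the statement is a formal consequence of the lemmas, no fresh estimate is required, and the only care needed is to verify that the dichotomy is genuinely exhaustive and that each branch lands in the stated list. The substantive work has already been carried out inside the lemmas themselves — in particular the inequalities extracted from \cite[(3.14)]{Chen1} that force numerous $P_m$ to take their extreme admissible values and then collide with multiplicativity relations such as $P_{11}\geq P_5\cdot P_6$. Consequently I do not anticipate any real obstacle at the level of the corollary; it is essentially bookkeeping that assembles Lemmas \ref{b2} and \ref{b3} around the case distinction $n_{1,6}^0=0$ versus $n_{1,6}^0\neq 0$. The one point worth stating explicitly is why conclusion (i) is phrased as ``$n_{1,r}^0=0$ for all $r\geq 6$'' rather than merely for $r\geq 7$: it is the $n_{1,6}^0=0$ branch that upgrades the range of Lemma \ref{b2} from $r\geq 7$ to $r\geq 6$.
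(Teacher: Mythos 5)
Your proposal is correct and follows exactly the paper's route: the paper presents the corollary as an immediate consequence of Lemmas \ref{b1}, \ref{b2} and \ref{b3}, assembled precisely via the dichotomy $n_{1,6}^0=0$ versus $n_{1,6}^0\neq 0$ that you spell out. Your write-up merely makes explicit the bookkeeping the paper leaves implicit, so there is nothing to correct.
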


\subsection{Classification of Case (i)} By the definition of $R$ and $\epsilon$, we have  $R=2n_{1,5}^0$ and $\epsilon=n_{1,5}^0$.
To be precise, we have
$$
n_{1,5}^0=\epsilon=-P_3+P_4+P_5+P_6-P_7,
$$
$$
R=2\epsilon=-2P_3+2P_4+2P_5+2P_6-2P_7.
$$

We list all $B^{(n)}$ ($7\leq n\leq 12$) here to guide our computation. 
In explicit, we have
$$
\epsilon_7={\chi}-P_3+P_6+P_7-P_8
$$
and $B^{(7)}=\{n_{b,r}^7\times(b,r)\}_{\frac{b}{r}\in{S^{(7)}}}$ has its weights:
\begin{equation*}
B^{(7)}
\begin{cases}
n^7_{1,2}=2{\chi}-3P_3+3P_4-P_5+P_6-2P_7+P_8+\eta\\
n^7_{3,7}={\chi}-P_3+P_6+P_7-P_8-\eta\\
n^7_{2,5}={\chi}+P_3-P_4+P_5-3P_6+P_8+\eta\\
n^7_{1,3}=2{\chi}+2P_3-2P_4+2P_6-P_7-\eta\\
n^7_{2,7}=\eta\\
n^7_{1,4}={\chi}+2P_3+P_4-2P_5-P_6+P_7-\eta\\
n^7_{1,5}=-P_3+P_4+P_5+P_6-P_7
\end{cases}
\end{equation*}
where $\eta$ is the number of prime packings of type $\{(1,3), (1,4)\}\succ \{(2,7)\}$. 
We have already known
$$
\epsilon_8=-P_3-P_4+P_5+P_6+P_8-P_9
$$
Thus, taking some prime packings into consideration,
 $B^{(8)}=\{n_{b,r}^8\times(b,r)\}_{\frac{b}{r}\in{S^{(8)}}}$ has coefficients:
\begin{equation*}
B^{(8)}
\begin{cases}
n^8_{1,2}=2{\chi}-3P_3+2P_4-P_5+P_6-2P_7+P_8+\eta\\
n^8_{3,7}={\chi}-P_3+P_6+P_7-P_8-\eta\\
n^8_{2,5}={\chi}+2P_3-4P_6+P_9+\eta\\
n^8_{3,8}=-P_3-P_4+P_5+P_6+P_8-P_9\\
n^8_{1,3}=2{\chi}+3P_3-P_4-P_5+P_6-P_7-P_8+P_9-\eta\\
n^8_{2,7}=\eta\\
n^8_{1,4}={\chi}+2P_3+P_4-2P_5-P_6+P_7-\eta\\
n^8_{1,5}=-P_3+P_4+P_5+P_6-P_7
\end{cases}
\end{equation*}
We know that:
$$
\epsilon_9=-P_3-P_6+P_8+P_9-P_{10}+\eta
$$
 Moreover $S^{(9)}-S^{(8)}=\{\frac{4}{9},\frac{2}{9}\}$.
 Let $\zeta$ be the number of prime packing of type
 $\{(1,2),(3,7)\}\succ\{(4,9)\}$, then the number of
 type $\{(1,4),(1,5)\}\succ\{(2,9)\}$ prime packing
 is $\epsilon_9-\zeta$. We can get $B^{(9)}$ consisting
 of the following coefficients:
\begin{equation*}
B^{(9)}
\begin{cases}
n^9_{1,2}=2{\chi}-3P_3+2P_4-P_5+P_6-2P_7+P_8+\eta-\zeta\\
n^9_{4,9}=\zeta\\
n^9_{3,7}={\chi}-P_3+P_6+P_7-P_8-\eta-\zeta\\
n^9_{2,5}={\chi}+2P_3-4P_6+P_9+\eta\\
n^9_{3,8}=-P_3-P_4+P_5+P_6+P_8-P_9\\
n^9_{1,3}=2{\chi}+3P_3-P_4-P_5+P_6-P_7-P_8+P_9-\eta\\
n^9_{2,7}=\eta\\
n^9_{1,4}={\chi}+3P_3+P_4-2P_5-P_6+P_7-P_8-P_9+P_{10}-2\eta+\zeta\\
n^9_{2,9}=-P_3-P_6+P_8+P_9-P_{10}+\eta-\zeta\\
n^9_{1,5}=P_4+P_5+2P_6-P_7-P_8-P_9+P_{10}-\eta+\zeta
\end{cases}
\end{equation*}
One has
$$
\epsilon_{10}=-P_4-P_5+P_6+P_7+P_{10}-P_{11}-\eta
$$
and then $B^{(10)}$ consists of the following coefficients:
\begin{equation*}
B^{(10)}
\begin{cases}
n^{10}_{1,2}=2{\chi}-3P_3+2P_4-P_5+P_6-2P_7+P_8+\eta-\zeta\\
n^{10}_{4,9}=\zeta\\
n^{10}_{3,7}={\chi}-P_3+P_6+P_7-P_8-\eta-\zeta\\
n^{10}_{2,5}={\chi}+2P_3-4P_6+P_9+\eta\\
n^{10}_{3,8}=-P_3-P_4+P_5+P_6+P_8-P_9\\
n^{10}_{1,3}=2{\chi}+3P_3-2P_7-P_8+P_9-P_{10}+P_{11}-\eta\\
n^{10}_{3,10}=-P_4-P_5+P_6+P_7+P_{10}-P_{11}-\eta\\
n^{10}_{2,7}=P_4+P_5-P_6-P_7-P_{10}+P_{11}+2\eta\\
n^{10}_{1,4}={\chi}+3P_3+P_4-2P_5+P_7-P_8-P_9+P_{10}-2\eta+\zeta\\
n^{10}_{2,9}=-P_3-P_6+P_8+P_9-P_{10}+\eta-\zeta\\
n^{10}_{1,5}=P_4+P_5+2P_6-P_7-P_8-P_9+P_{10}-\eta+\zeta
\end{cases}
\end{equation*}
By computing $\Delta^{11}(B^{(10)})$, I get
$$
\epsilon_{11}={\chi}-P_5-P_6+P_9+P_{11}-P_{12}-\zeta
$$
 Let $\alpha$ be the number of prime packing of
 type $\{(1,2),(4,9)\}\succ\{(5,11)\}$, and $\beta$
 be the number of prime packing of type $\{(1,3),(3,8)\}\succ\{(4,11)\}$.
 Then, I get $B^{(11)}$ with
\begin{equation*}
B^{(11)}
\begin{cases}
n^{11}_{1,2}=2{\chi}-3P_3+2P_4-P_5+P_6-2P_7+P_8+\eta-\zeta-\alpha\\
n^{11}_{5,11}=\alpha\\
n^{11}_{4,9}=\zeta-\alpha\\
n^{11}_{3,7}={\chi}-P_3+P_6+P_7-P_8-\eta-\zeta\\
n^{11}_{2,5}={\chi}+2P_3-4P_6+P_9+\eta\\
n^{11}_{3,8}=-P_3-P_4+P_5+P_6+P_8-P_9-\beta\\
n^{11}_{4,11}=\beta\\
n^{11}_{1,3}=2{\chi}+3P_3-2P_7-P_8+P_9-P_{10}+P_{11}-\eta-\beta\\
n^{11}_{3,10}=-P_4-P_5+P_6+P_7+P_{10}-P_{11}-\eta\\
n^{11}_{2,7}=-{\chi}+P_4+2P_5-P_7-P_9-P_{10}+P_{12}+2\eta+\zeta+\alpha+\beta\\
n^{11}_{3,11}={\chi}-P_5-P_6+P_9+P_{11}-P_{12}-\zeta-\alpha-\beta\\
n^{11}_{1,4}=3P_3+P_4-P_5+P_6+P_7-P_8-2P_9+P_{10}-P_{11}+P_{12}-2\eta+2\zeta+\alpha+\beta\\
n^{11}_{2,9}=-P_3-P_6+P_8+P_9-P_{10}+\eta-\zeta\\
n^{11}_{1,5}=P_4+P_5+2P_6-P_7-P_8-P_9+P_{10}-\eta+\zeta
\end{cases}
\end{equation*}
Finally, since
$$
\epsilon_{12}=-{\chi}-2P_3-P_4+P_5+P_8+P_{12}-P_{13}+\eta
$$
\begin{equation*}
B^{(12)}
\begin{cases}
n^{12}_{1,2}=2{\chi}-3P_3+2P_4-P_5+P_6-2P_7+P_8+\eta-\zeta-\alpha\\
n^{12}_{5,11}=\alpha\\
n^{12}_{4,9}=\zeta-\alpha\\
n^{12}_{3,7}=2{\chi}+P_3+P_4-P_5+P_6+P_7-2P_8-P_{12}+P_{13}-2\eta-\zeta\\
n^{12}_{5,12}=-{\chi}-2P_3-P_4+P_5+P_8+P_{12}-P_{13}+\eta\\
n^{12}_{2,5}=2{\chi}+4P_3+P_4-P_5-4P_6-P_8+P_9-P_{12}+P_{13}\\
n^{12}_{3,8}=-P_3-P_4+P_5+P_6+P_8-P_9-\beta\\
n^{12}_{4,11}=\beta\\
n^{12}_{1,3}=2{\chi}+3P_3-2P_7-P_8+P_9-P_{10}+P_{11}-\eta-\beta\\
n^{12}_{3,10}=-P_4-P_5+P_6+P_7+P_{10}-P_{11}-\eta\\
n^{12}_{2,7}=-{\chi}+P_4+2P_5-P_7-P_9-P_{10}+P_{12}+2\eta+\zeta+\alpha+\beta\\
n^{12}_{3,11}={\chi}-P_5-P_6+P_9+P_{11}-P_{12}-\zeta-\alpha-\beta\\
n^{12}_{1,4}=3P_3+P_4-P_5+P_6+P_7-P_8-2P_9+P_{10}-P_{11}+P_{12}-2\eta+2\zeta+\alpha+\beta\\
n^{12}_{2,9}=-P_3-P_6+P_8+P_9-P_{10}+\eta-\zeta\\
n^{12}_{1,5}=P_4+P_5+2P_6-P_7-P_8-P_9+P_{10}-\eta+\zeta
\end{cases}
\end{equation*}
%\subsection{Range of Variable}

We now explain the effectivity of the possible computation. We have $\epsilon_{10}+\epsilon_{12}\geq 0$ which gives rise to:
\begin{equation}
2P_5+3P_6+P_8+P_{10}+2\geq{\chi}+4P_3+P_7+P_{11}+P_{13}+R
\end{equation}
Inserting the expression of $R$, one has:
$$
{\chi}\leq-2P_3-2P_4+P_6+P_7+P_8+P_{10}-P_{11}-P_{13}+2.
$$
If the equality holds, one has
$
P_3=P_4=P_{11}=P_{13}=0$ and $P_6=P_7=P_8=P_{10}=1.$
But then $P_{13}\geq{P_6\cdot P_7=1}$, a contradiction. Thus we have 
\begin{equation}
{\chi}\leq5
\end{equation}
Considering $n_{1,4}^7\geq 0$, we get:
$$
\eta\leq{\chi}+2P_3+P_4-2P_5-P_6+P_7.
$$
So,
\begin{equation}
\eta\leq{\chi}+4
\end{equation}
We have $n_{2,9}^7\geq 0$, which implies:
$$
\zeta\leq{\chi}+P_3+P_4-2P_5-2P_6+P_7+P_8+P_9-P_{10}.
$$
Then,
\begin{equation}
\zeta\leq{\chi}+4.
\end{equation}
Similarly, by $n_{4,9}^{11}\geq0$ and $n_{3,8}^{11}\geq 0$, we get:
\begin{equation}
\alpha\leq\zeta
\end{equation}
\begin{equation}
\beta\leq3
\end{equation}
Finally we have:
\begin{equation}
P_{13}\leq-{\chi}-2P_3-2P_4+P_6+P_7+P_8+P_{10}-P_{11}+2\leq 
-{\chi}+6.
\end{equation}

Inequalities $(3.3)\sim (3.8)$ tell us that $\{B^{(12)}\}$ is a computable finite set. 
Our computer program using Matlab outputs totally 
 $149$ classes as follows. We also list all possible minimal positive baskets dominated by these 149 classes of $B^{(12)}$.  The last volume provides their volumes. 

\par\nobreak
\LTleft=-10pt \LTright=0pt
\begin{center}
\scriptsize
\begin{longtable}{p{4mm}p{25mm}p{80mm}p{10mm}}
\noalign{\hrule height .08em\vskip.8ex}
No & $({\chi},P_3,P_4,\ldots,P_{11})$ & $B^{(12)}=(n_{1,2}^{12},n_{5,11}^{12},\ldots,n_{1,5}^{12})$ & $K^3$\quad \\
\noalign{\vskip .4ex \hrule height 0.05em\vskip.65ex}
1 & 2,0,0,0,0,0,0,0,0,0 &4,0,0,2,0,2,0,0,4,0,0,0,2,0,0 &1/210 \\
2 & 2,0,1,0,0,0,1,0,1,0 &7,0,0,1,0,2,0,0,2,0,0,0,3,0,1 &1/420 \\
3 & 2,0,0,1,0,0,0,1,1,0 &3,0,0,1,1,2,0,0,4,0,0,0,0,0,1 &1/420\\
a &        & (3,7),(5,12)$\succ$(8,19) & 1/570 \\
b &        & (5,12),2(2,5)$\succ$(9,22) & 1/1155 \\
4 & 2,0,0,1,0,0,0,1,1,0 & 3,0,0,2,0,3,0,0,4,0,0,0,0,0,1 & 1/210 \\
5 & 2,0,0,1,0,1,1,0,1,1 & 2,0,0,1,1,1,2,0,1,0,0,0,1,0,0 & 1/420 \\
a &        & (3,7),(5,12)$\succ$(8,19) & 1/570 \\
b &        & (5,12),(2,5)$\succ$(7,17) & 1/714 \\
6 & 2,0,0,1,0,1,1,0,1,1 & 2,0,0,2,0,2,2,0,1,0,0,0,1,0,0 & 1/210 \\
a &        & 2(2,5),2(3,8)$\succ$2(5,13) & 1/1092 \\
7 & 2,0,0,0,0,0,1,0,1,1 & 5,0,0,0,1,1,0,1,2,0,0,0,2,0,0 & 1/220 \\
a &        & (5,12),(2,5)$\succ$(7,17);(4,11),(1,3)$\succ$(5,14) & 1/714 \\
b &        & (4,11),2(1,3)$\succ$(6,17) & 1/1020 \\
8 , & 2,0,0,0,0,0,1,0,1,1 & 5,0,0,1,0,2,0,1,2,0,0,0,2,0,0 & 8/1155 \\
a &        & (4,11),2(1,3)$\succ$(6,17) & 2/595 \\
9 & 2,0,0,1,0,1,1,1,1,1 & 1,0,1,0,1,2,1,0,2,0,0,0,1,0,0 & 1/360 \\
a &        & (5,12),2(2,5)$\succ$(9,22) & 1/792 \\
b& &(2,5),(3,8)$\succ$(5,13)&1/1170\\
10 & 2,0,0,1,0,1,1,1,1,1 & 1,0,1,1,0,3,1,0,2,0,0,0,1,0,0 & 13/2520 \\
a &        & (4,9),(3,7)$\succ$(7,16);(6,15),(3,8)$\succ$(9,23) & 1/1104 \\
11 & 2,0,0,0,0,0,1,1,1,1 & 3,1,0,0,0,3,0,0,4,0,0,0,2,0,0 & 1/165 \\
a& &(1,2),(5,11)$\succ$(6,13)&1/390\\
12 & 2,0,0,0,0,0,0,0,1,0 & 5,0,0,0,1,2,0,0,3,0,1,0,1,0,0 & 1/210 \\
a &        & (5,12),2(2,5)$\succ$(9,22) & 1/308 \\
13 & 2,0,0,0,0,0,0,0,1,0 & 5,0,0,1,0,3,0,0,3,0,1,0,1,0,0 & 1/140 \\
14 & 2,0,0,0,1,0,0,1,1,0 & 6,0,0,2,0,0,0,0,4,1,0,0,0,0,1 & 1/210 \\
a &        & (1,3),(3,10)$\succ$(4,13) & 1/455 \\
15 & 2,0,1,0,1,0,1,1,1,0 & 9,0,0,1,0,0,0,0,3,0,1,0,0,1,1 & 1/630 \\
16 & 2,0,1,0,1,1,1,1,1,1 & 7,0,0,2,0,0,0,0,2,0,0,1,0,1,0 & 1/1386 \\
17 & 2,0,0,0,1,1,0,1,0,1 & 3,0,1,2,0,0,0,0,4,0,1,0,1,0,0 & 1/252 \\
a &        & (4,9),2(3,7)$\succ$(10,23) & 5/1932 \\
18 & 2,0,1,0,1,1,1,1,1,1 & 6,0,1,1,0,0,0,0,2,0,1,0,2,0,1 & 1/630 \\
19 & 2,0,0,0,1,0,1,1,1,1 & 6,0,1,0,0,0,1,0,4,0,1,0,0,0,1 & 19/2520 \\
20 & 2,0,0,0,1,1,1,1,1,1 & 4,0,1,1,0,0,1,0,2,1,0,0,1,0,0 & 13/2520 \\
a &        & (4,9),(3,7)$\succ$(7,16);(1,3),(3,10)$\succ$(4,13) & 1/624 \\
b &        & 2(1,3),(3,10)$\succ$(5,16) & 1/1008 \\
21 & 2,0,0,0,1,1,0,0,1,1 & 5,0,0,2,0,0,1,0,2,0,2,0,0,0,0 & 1/168 \\
22 & 2,0,0,0,1,1,0,1,1,1 & 4,0,1,0,1,0,0,0,3,0,2,0,0,0,0 & 1/252 \\
23 & 2,0,0,0,1,1,0,1,1,1 & 4,0,1,1,0,1,0,0,3,0,2,0,0,0,0 & 2/315 \\
a &        & (4,9),(3,7)$\succ$(7,16) & 3/560 \\
24 & 3,0,0,1,0,0,1,0,1,0 & 6,0,0,2,0,3,2,0,4,0,0,0,1,0,1 & 1/210 \\
a &        & (4,10),2(3,8)$\succ$(10,26) & 1/1092 \\
25 & 3,0,0,1,0,0,1,1,1,0 & 6,0,0,2,0,4,0,1,4,0,0,0,0,1,0 & 13/3465 \\
a &        & (4,11),(1,3)$\succ$(5,14) & 1/630 \\
26 & 3,0,0,1,0,1,1,0,1,1 & 4,0,0,3,0,3,1,1,2,0,0,0,2,0,0 & 31/9240 \\
a &        & (2,5),(3,8)$\succ$(5,13) & 43/30030 \\
b &        & (4,11),(1,3)$\succ$(5,14) & 1/840 \\
27 & 3,0,0,1,1,0,1,1,1,1 & 7,0,0,3,0,0,1,1,5,0,0,0,0,0,2 & 31/9240 \\
a &        & (3,8),(4,11)$\succ$(7,19) & 11/3990 \\
b &        & (4,11),(1,3)$\succ$(5,14) & 1/840 \\
28 & 3,0,0,0,0,0,1,0,0,0 & 6,0,1,1,0,3,1,0,5,0,0,0,3,0,0 & 13/2520 \\
a &        & (4,9),(3,7)$\succ$(7,16);(6,15),(3,8)$\succ$(9,23) & 1/1104 \\
29 & 3,0,0,1,0,0,1,1,1,0 & 5,0,1,1,0,4,1,0,5,0,0,0,1,0,1 & 13/2520 \\
a &        & (4,9),(3,7)$\succ$(7,16);(6,15),(3,8)$\succ$(9,23) & 1/1104 \\
b &        & (8,20),(3,8)$\succ$(11,28) & 1/630 \\
30 & 3,0,0,0,1,0,1,1,0,1 & 7,0,1,2,0,0,0,1,6,0,0,0,2,0,1 & 13/3465 \\
a &        & (4,9),(6,14)$\succ$(10,23) & 3/1265 \\
b &        & (4,11),(1,3)$\succ$(5,14) & 1/630 \\
31 & 3,0,0,1,0,1,1,1,1,1 & 3,0,1,2,0,4,0,1,3,0,0,0,2,0,0 & 13/3465 \\
a &        & (4,9),(6,14)$\succ$(10,23) & 3/1265 \\
b &        & (4,11),(1,3)$\succ$(5,14) & 1/630 \\
32 & 3,0,0,0,1,0,1,1,0,1 & 6,1,0,2,0,0,1,0,7,0,0,0,2,0,1 & 23/9240 \\
33 & 3,0,0,1,0,1,1,1,1,1 & 2,1,0,2,0,4,1,0,4,0,0,0,2,0,0 & 23/9240 \\
34 & 3,0,0,0,0,0,1,1,0,0 & 5,0,2,0,0,4,0,0,6,0,0,0,3,0,0 & 1/180 \\
35 & 3,0,0,0,1,0,0,0,0,0 & 8,0,0,3,0,0,1,0,6,0,1,0,1,0,1 & 1/280 \\
36 & 3,0,0,0,0,0,0,0,1,0 & 7,0,0,2,0,4,0,0,5,0,0,1,1,0,0 & 29/4620 \\
a &        & (3,11),(1,4)$\succ$(4,15) & 1/210 \\
37 & 3,0,0,1,0,1,0,0,1,0 & 4,0,0,2,1,3,1,0,3,0,1,0,1,0,0 & 1/840 \\
38 & 3,0,1,0,1,0,1,0,1,0 & 11,0,0,2,0,0,1,0,4,0,1,0,2,0,2 & 1/840 \\
39 & 3,0,0,0,1,0,1,0,0,0 & 9,0,0,2,0,0,2,0,5,0,1,0,0,1,0 & 1/252 \\
40 & 3,0,0,1,0,1,0,0,1,0 & 4,0,0,3,0,4,1,0,3,0,1,0,1,0,0 & 1/280 \\
a &        & (2,5),(3,8)$\succ$(5,13) & 3/1820 \\
41 & 3,0,0,0,1,0,1,0,1,0 & 9,0,0,2,0,0,2,0,4,1,0,0,1,0,1 & 1/210 \\
a &        & (1,3),(3,10)$\succ$(4,13) & 1/455 \\
42 & 3,0,0,0,1,0,1,0,1,1 & 9,0,0,2,0,0,2,0,5,0,0,1,0,0,1 & 29/4620 \\
43 & 3,0,0,1,1,1,1,0,1,1 & 6,0,0,3,0,0,3,0,3,0,1,0,0,0,1 & 1/280 \\
44 & 3,0,0,0,1,0,1,1,1,0 & 9,0,0,1,1,0,0,1,4,1,0,0,0,1,0 & 19/13860 \\
a &        & (3,7),(5,12)$\succ$(8,19) & 7/9405 \\
45 & 3,0,0,0,1,0,1,1,1,0 & 9,0,0,2,0,1,0,1,4,1,0,0,0,1,0 & 13/3465 \\
a &        & (4,11),(1,3)$\succ$(5,14) & 1/630 \\
b &        & (1,3),(3,10)$\succ$(4,13) & 27/22733 \\
46 & 3,0,0,0,1,0,1,0,1,1 & 9,0,0,2,0,0,1,1,4,0,1,0,1,0,1 & 53/9240 \\
a &        & (3,8),(4,11),(1,3)$\succ$(8,22) & 3/1540 \\
b &        & (4,11),3(1,3)$\succ$(7,20) & 1/840 \\
47 & 3,0,0,0,1,1,1,0,1,1 & 7,0,0,3,0,0,1,1,2,1,0,0,2,0,0 & 31/9240 \\
a &        & (3,8),(4,11)$\succ$(7,19) & 11/3990 \\
b &        & (4,11),(1,3)$\succ$(5,14) & 1/840 \\
c &        & (1,3),(3,10)$\succ$(4,13) & 19/24024 \\
48 & 3,0,0,1,1,1,0,1,1,1 & 5,0,0,4,0,1,0,1,4,0,1,0,0,0,1 & 1/462 \\
49 & 3,0,0,0,1,0,0,1,0,0 & 7,0,1,2,0,1,0,0,7,0,1,0,1,0,1 & 1/252 \\
a &        & (4,9),2(3,7)$\succ$(10,23) & 5/1932 \\
50 & 3,0,0,0,1,0,1,1,0,0 & 8,0,1,0,1,0,1,0,6,0,1,0,0,1,0 & 1/504 \\
51 & 3,0,0,1,0,1,0,1,1,0 & 3,0,1,1,1,4,0,0,4,0,1,0,1,0,0 & 1/630 \\
a &        & (3,7),(5,12)$\succ$(8,19) & 23/23940 \\
52 & 3,0,0,0,1,0,1,1,1,0 & 8,0,1,0,1,0,1,0,5,1,0,0,1,0,1 & 1/360 \\
53 & 3,0,0,0,1,0,1,1,1,1 & 8,0,1,0,1,0,1,0,6,0,0,1,0,0,1 & 17/3960 \\
54 & 3,0,0,0,0,0,1,0,1,0 & 7,0,1,0,0,4,1,0,4,0,1,0,2,0,0 & 19/2520 \\
a &        & 4(2,5),(3,8)$\succ$(11,28) & 1/252 \\
55 & 3,0,0,0,1,0,1,1,0,0 & 8,0,1,1,0,1,1,0,6,0,1,0,0,1,0 & 11/2520 \\
a &        & (4,9),(3,7)$\succ$(7,16);(2,5),(3,8)$\succ$(5,13) & 19/13104 \\
56 & 3,0,0,1,0,1,0,1,1,0 & 3,0,1,2,0,5,0,0,4,0,1,0,1,0,0 & 1/252 \\
a &        & (4,9),2(3,7)$\succ$(10,23) & 5/1932 \\
57 & 3,0,0,0,1,1,1,0,0,1 & 6,0,1,2,0,0,2,0,4,0,1,0,2,0,0 & 1/252 \\
a &        & (4,9),2(3,7)$\succ$(10,23) & 5/1932 \\
58 & 3,0,0,0,1,0,1,1,1,0 & 8,0,1,1,0,1,1,0,5,1,0,0,1,0,1 & 13/2520 \\
a &        & (4,9),(3,7)$\succ$(7,16);(2,5),(3,8)$\succ$(5,13) & 7/3120 \\
b &        & 2(1,3),(3,10)$\succ$(5,16) & 1/1008 \\
59 & 3,0,0,0,1,0,1,1,1,1 & 8,0,1,1,0,1,1,0,6,0,0,1,0,0,1 & 37/5544 \\
a &        & (4,9),(3,7)$\succ$(7,16);(2,5),(3,8)$\succ$(5,13) & 43/11440 \\
60 & 3,0,0,1,1,1,1,1,1,1 & 5,0,1,1,1,0,2,0,4,0,1,0,0,0,1 & 1/630 \\
a &        & (3,7),(5,12)$\succ$(8,19) & 23/23940 \\
61 & 3,0,0,1,1,1,1,1,1,1 & 5,0,1,2,0,1,2,0,4,0,1,0,0,0,1 & 1/252 \\
a &        & (4,9),2(3,7)$\succ$(10,23) & 5/1932 \\
b &        & (2,5),2(3,8)$\succ$(8,21) & 1/630 \\
62 & 3,0,0,0,1,0,1,1,1,1 & 8,0,1,0,1,0,0,1,5,0,1,0,1,0,1 & 13/3465 \\
a &        & (4,11),(1,3)$\succ$(5,14) & 1/630 \\
63 & 3,0,0,0,1,0,1,1,1,1 & 8,0,1,1,0,1,0,1,5,0,1,0,1,0,1 & 17/2772 \\
a &        & (4,9),(3,7)$\succ$(7,16);(4,11),2(1,3)$\succ$(6,17) & 3/1904 \\
b &        & (4,11),4(1,3)$\succ$(8,23) & 5/5796 \\
64 & 3,0,0,0,1,1,1,1,1,1 & 6,0,1,2,0,1,0,1,3,1,0,0,2,0,0 & 13/3465 \\
a &        & (4,9),2(3,7)$\succ$(10,23) & 3/1265 \\
b , &        & (4,11),(1,3)$\succ$(5,14) & 1/630 \\
c &        & (1,3),(3,10)$\succ$(4,13) & 27/22733 \\
65 & 3,0,0,0,1,0,1,1,1,1 & 7,1,0,0,1,0,1,0,6,0,1,0,1,0,1 & 23/9240 \\
66 & 3,0,0,0,1,0,1,1,1,1 & 7,1,0,1,0,1,1,0,6,0,1,0,1,0,1 & 3/616 \\
a &        & (2,5),(3,8)$\succ$(5,13) & 59/20020 \\
67 & 3,0,0,0,1,1,1,1,1,1 & 5,1,0,2,0,1,1,0,4,1,0,0,2,0,0 & 23/9240 \\
68 & 3,0,0,0,1,1,1,1,0,1 & 5,0,2,1,0,1,1,0,5,0,1,0,2,0,0 & 11/2520 \\
a &        & 2(4,9),(3,7)$\succ$(11,25);(2,5),(3,8)$\succ$(5,13) & 8/6825 \\
69 & 3,0,0,0,1,0,0,0,1,0 & 9,0,0,1,1,0,1,0,5,0,2,0,0,0,1 & 1/280 \\
a &        & (3,7),(5,12)$\succ$(8,19) & 47/15960 \\
70 & 3,0,0,0,1,0,0,0,1,0 & 9,0,0,2,0,1,1,0,5,0,2,0,0,0,1 & 1/168 \\
a &        & (2,5),(3,8)$\succ$(5,13) & 11/2730 \\
71 & 3,0,0,0,1,1,0,0,1,0 & 7,0,0,3,0,1,1,0,3,1,1,0,1,0,0 & 1/280 \\
a &        & (2,5),(3,8)$\succ$(5,13);(3,10),(2,7)$\succ$(5,17) & 5/6188 \\
b &        & (1,3),(3,10)$\succ$(4,13) & 11/10920 \\
72 & 3,0,0,0,1,1,0,0,1,1 & 7,0,0,3,0,1,1,0,4,0,1,1,0,0,0 & 47/9240 \\
a &        & (2,5),(3,8)$\succ$(5,13);(2,7),(3,11)$\succ$(5,18) & 2/819 \\
73 & 3,0,0,0,1,1,0,0,1,1 & 7,0,0,3,0,1,0,1,3,0,2,0,1,0,0 & 1/220 \\
&        & (4,11),2(1,3)$\succ$(6,17) & 1/1020 \\
74 & 3,0,0,0,1,0,0,1,1,0 & 8,0,1,0,1,1,0,0,6,0,2,0,0,0,1 & 1/252 \\
a &        & (5,12),(2,5)$\succ$(7,17) & 16/5355 \\
75 & 3,0,0,0,1,0,0,1,1,0 & 8,0,1,1,0,2,0,0,6,0,2,0,0,0,1 & 2/315 \\
a &        & (4,9),(3,7)$\succ$(7,16) & 3/560 \\
76 & 3,0,0,0,1,1,0,1,1,0 & 6,0,1,2,0,2,0,0,4,1,1,0,1,0,0 & 1/252 \\
a &        & (4,9),2(3,7)$\succ$(10,23);(3,10),(2,7)$\succ$(5,17) & 41/23460 \\
b &        & (1,3),(3,10)$\succ$(4,13) & 23/16380 \\
77 & 3,0,0,0,1,1,1,0,1,1 & 7,0,1,0,1,0,2,0,3,0,2,0,1,0,0 & 1/252 \\
78 & 3,0,0,0,1,1,0,1,1,1 & 6,0,1,2,0,2,0,0,5,0,1,1,0,0,0 & 19/3465 \\
a &        & (4,9),2(3,7)$\succ$(10,23);(2,7),(3,11)$\succ$(5,18) & 7/2070 \\
79 & 3,0,0,0,1,1,1,0,1,1 & 7,0,1,1,0,1,2,0,3,0,2,0,1,0,0 & 2/315 \\
a &        & (4,9),(3,7)$\succ$(7,16);(2,5),2(3,8)$\succ$(8,21) & 1/336 \\
80 & 3,0,0,0,1,1,0,1,1,1 & 5,1,0,2,0,2,0,0,5,0,2,0,1,0,0 & 17/4620 \\
81 & 3,0,0,0,1,1,1,1,1,1 & 6,0,2,0,0,2,1,0,4,0,2,0,1,0,0 & 17/2520 \\
a &        & 2(2,5),(3,8)$\succ$(7,18) & 1/252 \\
82 & 4,0,0,0,1,0,1,0,0,0 & 11,0,0,3,0,1,2,0,7,0,0,1,0,1,0 & 43/13860 \\
a &        & (2,5),2(3,8)$\succ$(8,21) & 1/1386 \\
83 & 4,0,0,1,0,1,0,0,1,0 & 6,0,0,4,0,5,1,0,5,0,0,1,1,0,0 & 5/1848 \\
a &        & (3,11),(1,4)$\succ$(4,15) & 1/840 \\
b &        & (2,5),(3,8)$\succ$(5,13) & 47/60060 \\
84 & 4,0,0,1,1,1,1,0,1,1 & 8,0,0,4,0,1,3,0,5,0,0,1,0,0,1 & 5/1848 \\
a &        & (2,5),(3,8)$\succ$(5,13) & 47/60060 \\
85 & 4,0,0,0,1,0,1,0,0,0 & 11,0,0,3,0,1,1,1,6,0,1,0,1,1,0 & 71/27720 \\
a &        & (3,8),(4,11)$\succ$(7,19) & 47/23940 \\
86 & 4,0,0,1,0,1,0,0,1,0 & 6,0,0,4,0,5,0,1,4,0,1,0,2,0,0 & 1/462 \\
87 & 4,0,0,0,1,0,1,0,1,0 & 11,0,0,3,0,1,1,1,5,1,0,0,2,0,1 & 31/9240 \\
a &        & (2,5),(3,8)$\succ$(5,13) & 43/30030 \\
b &        & (3,8),(4,11)$\succ$(7,19) & 11/3990 \\
c &        & (4,11),(1,3)$\succ$(5,14) & 1/840 \\
d &        & (1,3),(3,10)$\succ$(4,13) & 19/24024 \\
88 & 4,0,0,0,1,0,1,0,1,1 & 11,0,0,3,0,1,1,1,6,0,0,1,1,0,1 & 3/616 \\
a &        & (3,11),(1,4)$\succ$(4,15);(2,5),(3,8)$\succ$(5,13) & 43/30030 \\
b &        & (3,11),(1,4)$\succ$(4,15);(3,8),(4,11)$\succ$(7,19) & 11/3990 \\
c &        & (3,11),(1,4)$\succ$(4,15);(4,11),(1,3)$\succ$(5,14) & 1/840 \\
d &        & (2,5),(3,8)$\succ$(5,13);(4,11),(1,3)$\succ$(5,14) & 47/60060 \\
e &        & (4,11),2(1,3)$\succ$(6,17) & 41/31416 \\
89 & 4,0,0,1,1,1,1,0,1,1 & 8,0,0,4,0,1,2,1,4,0,1,0,1,0,1 & 1/462 \\
a &        & 2(3,8),(4,11)$\succ$(10,27) & 1/756 \\
90 & 4,0,0,0,1,0,1,0,1,1 & 11,0,0,3,0,1,0,2,5,0,1,0,2,0,1 & 1/231 \\
a &        & 2(4,11),(1,3)$\succ$(9,25) & 1/525 \\
b &        & (4,11),2(1,3)$\succ$(6,17) & 1/1309 \\
91 & 4,0,0,1,1,1,1,1,1,1 & 8,0,0,4,0,2,0,2,4,0,1,0,0,1,0 & 4/3465 \\
92 & 4,0,0,0,1,0,1,0,0,0 & 10,0,1,2,0,1,2,0,7,0,1,0,2,0,1 & 1/252 \\
a &        & (2,5),2(3,8)$\succ$(8,21) & 1/630 \\
b &        & (4,9),2(3,7)$\succ$(10,23) & 5/1932 \\
93 & 4,0,0,0,0,0,1,0,1,0 & 9,0,1,1,0,5,1,0,6,0,0,1,2,0,0 & 37/5544 \\
a &        & (4,9),(3,7)$\succ$(7,16);(3,11),2(1,4)$\succ$(5,19);(2,5),(3,8)$\succ$(5,13) & 27/19760 \\
b &        & (4,9),(3,7)$\succ$(7,16);5(2,5),(3,8)$\succ$(13,33) & 1/528 \\
c &        & 5(2,5),(3,8)$\succ$(13,33);(3,11),(1,4)$\succ$(4,15) & 19/13860 \\
d &        & (3,11),2(1,4)$\succ$(5,19),3(2,5),(3,8)$\succ$(9,23) & 4/3919 \\
94 & 4,0,0,0,1,0,1,1,0,0 & 10,0,1,2,0,2,1,0,8,0,0,1,0,1,0 & 97/27720 \\
a &        & 2(2,5),(3,8)$\succ$(7,18) & 1/1386 \\
b &        & (4,9),2(3,7)$\succ$(10,23) & 12/5663 \\
95 & 4,0,0,1,0,1,0,1,1,0 & 5,0,1,3,0,6,0,0,6,0,0,1,1,0,0 & 43/13860 \\
a &        & (3,11),(1,4)$\succ$(4,15) & 1/630 \\
b &        & (4,9),3(3,7)$\succ$(13,30) & 1/660 \\
96 & 4,0,0,1,0,1,1,0,1,0 & 6,0,1,2,0,5,2,0,4,0,1,0,2,0,0 & 1/252 \\
a &        & (4,9),2(3,7)$\succ$(10,23) ;(2,5),(3,8)$\succ$(5,13)&21/31583 \\
b &        & 2(2,5),(3,8)$\succ$(7,18) & 1/840 \\
c &        & (2,5),2(3,8)$\succ$(8,21) & 1/630 \\
97 & 4,0,0,1,1,1,1,1,1,1 & 7,0,1,3,0,2,2,0,6,0,0,1,0,0,1 & 43/13860 \\
a &        & (4,9),3(3,7)$\succ$(13,30) & 1/660 \\
b &        & (2,5),2(3,8)$\succ$(8,21) & 1/1386 \\
98 & 4,0,0,0,0,0,1,0,1,0 & 9,0,1,1,0,5,0,1,5,0,1,0,3,0,0 & 17/2772 \\
a &        & (4,9),(3,7)$\succ$(7,16);2(1,3),(4,11)$\succ$(6,17) & 3/1904 \\
b &        & 4(1,3),(4,11)$\succ$(8,23) & 5/5796 \\
99 & 4,0,0,0,1,0,1,1,0,0 & 10,0,1,2,0,2,0,1,7,0,1,0,1,1,0 & 41/13860 \\
a &        & (4,9),2(3,7)$\succ$(10,23) & 33/20914 \\
b &        & (1,4),(2,9)$\succ$(3,13) & 37/45045 \\
c &        & (1,3),(4,11)$\succ$(5,14) & 1/1260 \\
100 & 4,0,0,0,1,0,1,1,1,0 & 10,0,1,2,0,2,0,1,6,1,0,0,2,0,1 & 13/3465 \\
a &        & (4,11),2(3,7)$\succ$(10,23) & 3/1265 \\
b &        & (1,3),(3,10)$\succ$(4,13) & 27/22733 \\
c &        & (4,11),(1,3)$\succ$(5,14) & 1/630 \\
101 & 4,0,0,0,1,0,1,1,1,1 & 10,0,1,2,0,2,0,1,7,0,0,1,1,0,1 & 73/13860 \\
a &        & (3,11),(1,4)$\succ$(4,15);(4,9),2(3,7)$\succ$(10,23) & 3/1265 \\
b &        & (3,11),(1,4)$\succ$(4,15); (1,3),(4,11)$\succ$(5,14) & 1/630 \\
c &        & (4,9),2(3,7)$\succ$(10,23); (1,3),(4,11)$\succ$(5,14) & 29/16839 \\
d &        & 3(1,3),(4,11)$\succ$(7,20) & 1/1386 \\
e &        & (4,9),(3,7)$\succ$(7,16);(4,11),2(1,3)$\succ$(6,17)&19/26767\\
102 &4,0,0,1,1,1,1,1,1,1 & 7,0,1,3,0,2,1,1,5,0,1,0,1,0,1 & 71/27720 \\
a &        & (4,9),3(3,7)$\succ$(13,30) & 3/3080 \\
b &        & (3,8),(4,11)$\succ$(7,19);(4,9),3(3,7)$\succ$(13,30)  & 31/31920 \\
103 & 4,0,0,0,0,0,1,0,1,0 & 8,1,0,1,0,5,1,0,6,0,1,0,3,0,0 & 3/616 \\
a &        & 5(2,5),(3,8)$\succ$(13,33) & 1/924 \\
b &        & (1,2),(5,11)$\succ$(6,13) & 1/728 \\
104 & 4,0,0,0,1,0,1,1,0,0 & 9,1,0,2,0,2,1,0,8,0,1,0,1,1,0 & 47/27720 \\
105 & 4,0,0,1,0,1,0,1,1,0 & 4,1,0,3,0,6,0,0,6,0,1,0,2,0,0 & 1/770 \\
106 & 4,0,0,0,1,0,1,1,1,0 & 9,1,0,2,0,2,1,0,7,1,0,0,2,0,1 & 23/9240 \\
107 & 4,0,0,0,1,0,1,1,1,1 & 9,1,0,2,0,2,1,0,8,0,0,1,1,0,1 & 37/9240 \\
a &        & (3,11),(1,4)$\succ$(4,15) & 23/9240 \\
b &        & 2(2,5),(3,8)$\succ$(7,18) & 17/13860 \\
108 & 4,0,0,1,1,1,1,1,1,1 & 6,1,0,3,0,2,2,0,6,0,1,0,1,0,1 & 1/770 \\
109 & 4,0,0,0,1,0,1,1,1,1 & 9,1,0,2,0,2,0,1,7,0,1,0,2,0,1 & 4/1155 \\
a &        & (4,11),(1,3)$\succ$(5,14) & 1/770 \\
110 & 4,0,0,0,1,0,1,1,0,0 & 9,0,2,1,0,2,1,0,8,0,1,0,2,0,1 & 11/2520 \\
a &        & 2(4,9),(3,7)$\succ$(11,25);(2,5),(3,8)$\succ$(5,13) & 8/6825 \\
b &        & 2(2,5),(3,8)$\succ$(7,18) & 1/630 \\
111 & 4,0,0,0,0,0,1,1,1,0 & 8,0,2,0,0,6,0,0,7,0,0,1,2,0,0 & 7/990 \\
a &        & (3,11),2(1,4)$\succ$(5,19) & 4/855 \\
112 & 4,0,0,1,0,1,1,1,1,0 & 5,0,2,1,0,6,1,0,5,0,1,0,2,0,0 & 11/2520 \\
a &        & 2(4,9),(3,7)$\succ$(11,25);(2,5),(3,8)$\succ$(5,13) & 8/6825 \\
b &        & 4(2,5),(3,8)$\succ$(11,28) & 1/1260 \\
113 & 4,0,0,0,0,0,1,1,1,0 & 7,1,1,0,0,6,0,0,7,0,1,0,3,0,0 & 73/13860 \\
a &        & (5,11),(4,9)$\succ$(9,20) & 1/210 \\
b &        & (1,2),(5,11)$\succ$(6,13) & 1/945 \\
114 & 4,0,0,0,1,0,0,0,1,0 & 11,0,0,3,0,2,1,0,7,0,1,1,0,0,1 & 47/9240 \\
a &        & 2(2,5),(3,8)$\succ$(7,18);(2,7),(3,11)$\succ$(5,18) & 1/630 \\
115 & 4,0,0,0,1,0,0,0,1,0 & 11,0,0,3,0,2,0,1,6,0,2,0,1,0,1 & 1/220 \\
a &        & 2(1,3),(4,11)$\succ$(6,17) & 1/1020 \\
116 & 4,0,0,0,1,0,1,0,1,0 & 12,0,0,1,1,1,1,1,5,0,2,0,0,1,0 & 71/27720 \\
a &        & (3,7),(5,12)$\succ$(8,19);(3,8),(4,11)$\succ$(7,19) & 8/5985 \\
b &        & (5,12),(2,5)$\succ$(7,17);(3,8),(4,11)$\succ$(7,19) & 20/20349 \\
117 & 4,0,0,0,1,0,1,0,1,0 & 12,0,0,2,0,2,1,1,5,0,2,0,0,1,0 & 137/27720 \\
a &        & 2(2,5),(3,8)$\succ$(7,18) & 1/462 \\
b &        & (2,5),(3,8)$\succ$(5,13);(4,11),(1,3)$\succ$(5,14) & 1/1170 \\
c &        & (3,8),(4,11)$\succ$(7,19) & 26/5985 \\
d &        & (4,11),2(1,3)$\succ$(6,17) & 59/42840\\
118 & 4,0,0,0,1,0,1,0,1,0 & 11,0,1,0,1,1,2,0,6,0,2,0,1,0,1 & 1/252 \\
a &        & (5,12),(2,5)$\succ$(7,17) & 16/5355 \\
b &        & (2,5),2(3,8)$\succ$(8,21) & 1/630 \\
119 & 4,0,0,0,1,0,0,1,1,0 & 10,0,1,2,0,3,0,0,8,0,1,1,0,0,1 & 19/3465 \\
a &        & (4,9),2(3,7)$\succ$(10,23);(2,7),(3,11)$\succ$(5,18) & 7/2070 \\
120 & 4,0,0,0,1,0,1,0,1,0 & 11,0,1,1,0,2,2,0,6,0,2,0,1,0,1 & 2/315 \\
a &        & (4,9),(3,7)$\succ$(7,16);2(2,5),2(3,8)$\succ$2(5,13) & 11/7280 \\
121 & 4,0,0,0,1,1,1,0,1,0 & 9,0,1,2,0,2,2,0,4,1,1,0,2,0,0 & 1/252 \\
a &        & (4,9),2(3,7)$\succ$(10,23);(3,10),(2,7)$\succ$(5,17) & 41/23460 \\
b &        & (4,9),2(3,7)$\succ$(10,23);(2,5),(3,8)$\succ$(5,13) & 21/31583 \\
c &        & (4,9),(3,7)$\succ$(7,16);(3,10),(2,7)$\succ$(5,17) &61/28560\\
d &        & 2(2,5),(3,8)$\succ$(7,18) & 1/840 \\
e &        & (2,5),2(3,8)$\succ$(8,21);(3,10),(2,7)$\succ$(5,17) & 4/5355 \\
f &        & (1,3),(3,10)$\succ$(4,13) & 23/16380 \\
122 & 4,0,0,0,1,1,1,0,1,1 & 9,0,1,2,0,2,2,0,5,0,1,1,1,0,0 & 19/3465 \\
a &        & (4,9),2(3,7)$\succ$(10,23);(2,7),(3,11)$\succ$(5,18);(2,5),2(3,8)$\succ$(8,21) & 29/28980 \\
b &        & (4,9),2(3,7)$\succ$(10,23);(3,11),(1,4)$\succ$(4,15);(2,5),(3,8)$\succ$(5,13) & 21/31583 \\
c&&(4,9),(3,7)$\succ$(7,16);,2(2,5),2(3,8)$\succ$2(5,13)&31/48048\\
d &        & (4,9),(3,7)$\succ$(7,16);(2,7),(3,11)$\succ$(5,18),2(2,5),(3,8)$\succ$(7,18) & 1/1008 \\
e&        & (4,9),(3,7)$\succ$(7,16); (3,11),(1,4)$\succ$(4,15); (2,5),(3,8)$\succ$(5,13) & 23/21840 \\
e &        & (2,7),(3,11)$\succ$(5,18);2(2,5),2(3,8)$\succ$2(5,13) & 1/1092 \\
123 & 4,0,0,0,1,0,1,1,1,0 & 11,0,1,0,1,2,0,1,6,0,2,0,0,1,0 & 41/13860 \\
a &        & (5,12),2(2,5)$\succ$(9,22) & 1/693 \\
b &        & (4,11),(1,3)$\succ$(5,14) & 1/1260 \\
124 & 4,0,0,0,1,0,1,1,1,0 & 11,0,1,1,0,3,0,1,6,0,2,0,0,1,0 & 37/6930 \\
a &        & (4,9),(3,7)$\succ$(7,16);2(1,3),(4,11)$\succ$(6,17) & 31/39643 \\
b &        & 3(1,3),(4,11)$\succ$(7,20) & 31/39643 \\
125 & 4,0,0,0,1,1,1,0,1,1 & 9,0,1,2,0,2,1,1,4,0,2,0,2,0,0 & 137/27720 \\
a &        & (4,9),2(3,7)$\succ$(10,23);2(2,5),(3,8)$\succ$(7,18) & 25/31878 \\
b &        & (4,9),2(3,7)$\succ$(10,23);(4,11),(1,3)$\succ$(5,14) & 9/6440 \\
c &        & (4,9),2(3,7)$\succ$(10,23);(3,8),(4,11)$\succ$(7,19) & 59/19906 \\
d &        & (4,11),2(1,3)$\succ$(6,17) & 59/42840 \\
e &        & 2(2,5),(3,8)$\succ$(7,18);(4,11),(1,3)$\succ$(5,14) & 1/1170 \\
f &        & (4,9),(3,7)$\succ$(7,16);(4,11),(1,3)$\succ$(5,14) &1/560\\
126 & 4,0,0,0,1,0,0,1,1,0 & 9,1,0,2,0,3,0,0,8,0,2,0,1,0,1 & 17/4620 \\
127 & 4,0,0,0,1,0,1,1,1,0 & 10,1,0,0,1,2,1,0,7,0,2,0,0,1,0 & 47/27720 \\
a &        & (5,12),(2,5)$\succ$(7,17) & 68/95087 \\
128 & 4,0,0,0,1,0,1,1,1,0 & 10,1,0,1,0,3,1,0,7,0,2,0,0,1,0 & 42/10303 \\
a &        & 3(2,5),(3,8)$\succ$(9,23) & 13/15939 \\
129 & 4,0,0,0,1,1,1,0,1,1 & 8,1,0,2,0,2,2,0,5,0,2,0,2,0,0 & 17/4620 \\
a &        & 2(2,5),(3,8)$\succ$(7,18) & 5/5544 \\
b &        & (2,5),2(3,8)$\succ$(8,21) & 1/770 \\
130 & 4,0,0,0,1,0,1,1,1,0 & 10,0,2,0,0,3,1,0,7,0,2,0,1,0,1 & 17/2520 \\
a &        & 3(2,5),(3,8)$\succ$(9,23) & 101/28980 \\
131 & 4,0,0,0,1,1,1,1,1,0 & 8,0,2,1,0,3,1,0,5,1,1,0,2,0,0 & 11/2520 \\
a &        & 2(4,9),(3,7)$\succ$(11,25);(2,5),(3,8)$\succ$(5,13) & 13/4200 \\
b &        & 2(4,9),(3,7)$\succ$(11,25);(3,10),(2,7)$\succ$(5,17) & 23/10200 \\
c &        & (4,9),(3,7)$\succ$(7,16);(1,3),(3,10)$\succ$(4,13) & 31/38323 \\
d &        & 3(2,5),(3,8)$\succ$(9,23) & 8/7245 \\
e &        & 2(2,5),(3,8)$\succ$(7,18),(3,10),(2,7)$\succ$(5,17) & 4/5355 \\
f &        & (1,3),(3,10)$\succ$(4,13) & 59/32760 \\
132 & 4,0,0,0,1,1,1,1,1,1 & 8,0,2,1,0,3,1,0,6,0,1,1,1,0,0 & 49/8333 \\
a &        & 2(4,9),(3,7)$\succ$(11,25);(2,7),(3,11)$\succ$(5,18);2(2,5),(3,8)$\succ$(7,18) & 1/900 \\
b &        & 2(4,9),(3,7)$\succ$(11,25);3(2,5),(3,8)$\succ$(9,23) & 239/177100 \\
c &        & (4,9),(3,7)$\succ$(7,16);(2,7),(3,11)$\succ$(5,18);3(2,5),(3,8)$\succ$(9,23) & 1/1104 \\
d &        & 2(4,9),(3,7)$\succ$(11,25);(3,11),(1,4)$\succ$(4,15);(2,5),(3,8)$\succ$(5,13) & 8/6825 \\
e &        & (3,11),(1,4)$\succ$(4,15); 3(2,5),(3,8)$\succ$(9,23) & 8/7245 \\

133 & 4,0,0,0,1,1,1,1,1,1 & 8,0,2,1,0,3,0,1,5,0,2,0,2,0,0 & 37/6930 \\
a &        & 2(4,9),(3,7)$\succ$(11,25);(4,11),(1,3)$\succ$(5,14) & 1/525 \\
b &        & (4,9),(3,7)$\succ$(7,16); (4,11),2(1,3)$\succ$(6,17) & 31/39643 \\
c &        & (4,11),3(1,3)$\succ$(7,20) & 1/1260 \\
134 & 4,0,0,0,1,1,1,1,1,1 & 7,1,1,1,0,3,1,0,6,0,2,0,2,0,0 & 42/10303 \\
a &        & (5,11),(4,9)$\succ$(9,20);2(2,5),(3,8)$\succ$(7,18) & 1/1260 \\
b &        & (4,9),(3,7)$\succ$(7,16); (2,5),(3,8)$\succ$(5,13) & 40/34443 \\
c &        & 3(2,5),(3,8)$\succ$(9,23) & 13/15939 \\
135 & 4,0,0,0,1,1,0,0,1,0 & 9,0,1,2,0,3,1,0,5,0,3,0,1,0,0 & 13/2520 \\
a &        & (4,9),2(3,7)$\succ$(10,23);2(2,5),(3,8)$\succ$(7,18) & 29/28980 \\
b &        & (4,9),(3,7)$\succ$(7,16);3(2,5),(3,8)$\succ$(9,23) & 1/1104 \\
136 & 4,0,0,0,1,1,0,1,1,0 & 8,0,2,1,0,4,0,0,6,0,3,0,1,0,0 & 1/180 \\
a &        & 2(4,9),(3,7)$\succ$(11,25) & 3/700 \\
137 & 5,0,0,0,1,0,1,0,1,0 & 14,0,0,3,0,3,1,1,7,0,1,1,0,1,0 & 42/10303 \\
a &        & (2,7),(3,11)$\succ$(5,18);(2,5),(3,8)$\succ$(5,13) & 43/30030 \\
b &        & (2,7),(3,11)$\succ$(5,18);(4,11),(1,3)$\succ$(5,14) & 1/840 \\
c &        & (2,7),(3,11)$\succ$(5,18);(3,8),(4,11)$\succ$(7,19) & 11/3990 \\
d &        & 3(2,5),(3,8)$\succ$(9,23) & 13/15939 \\
e &        & (4,11),(1,3)$\succ$(5,14) & 53/27720 \\
138 & 5,0,0,0,1,0,1,0,1,0 & 14,0,0,3,0,3,0,2,6,0,2,0,1,1,0 & 7/1980 \\
a &        & (1,4),(2,9)$\succ$(3,13) & 1/715 \\
b &        & 2(4,11),(1,3)$\succ$(9,25) & 1/900 \\
139 & 5,0,0,0,1,0,1,0,1,0 & 13,0,1,2,0,3,2,0,8,0,1,1,1,0,1 & 19/3465 \\
a &        & (4,9),2(3,7)$\succ$(10,23);(2,7),(3,11)$\succ$(5,18);(2,5),2(3,8)$\succ$(8,21) & 29/28980 \\
b &        & (4,9),2(3,7)$\succ$(10,23);(3,11),(1,4)$\succ$(4,15);(2,5),(3,8)$\succ$(5£¬13)& 21/31583  \\
c &        & (4,9),(3,7)$\succ$(7,16);(2,7),(3,11)$\succ$(5,18);2(2,5),(3,8)$\succ$(7,18) & 1/1008 \\
d &        & (3,11),(1,4)$\succ$(4,15); (2,5),2(3,8)$\succ$(8,21) & 1/630 \\
e &        & (3,11),(1,4)$\succ$(4,15);3(2,5),(3,8)$\succ$(7,18) & 41/57960 \\
f &        & (2,7),(3,11)$\succ$(5,18);3(2,5),(3,8)$\succ$(9,23) & 29/19320 \\
g &        & (2,7),(3,11)$\succ$(5,18);2(2,5),2(3,8)$\succ$2(5,13) & 1/1092 \\
140 & 5,0,0,0,1,0,1,0,1,0 & 13,0,1,2,0,3,1,1,7,0,2,0,2,0,1 & 137/27720 \\
a &        & (4,9),2(3,7)$\succ$(10,23);2(2,5),(3,8)$\succ$(7,18) & 25/31878 \\
b &        & (4,9),2(3,7)$\succ$(10,23);(4,11),(1,3)$\succ$(5,14) & 9/6440 \\
c &        & (4,9),2(3,7)$\succ$(10,23);(3,8),(4,11)$\succ$(7,19) & 59/19906 \\
d &        & (4,9),(3,7)$\succ$(7,16);3(2,5),(3,8)$\succ$(9,23) & 17/24661\\
e &        & (2,5),(3,8)$\succ$(5,13);(4,11),(1,3)$\succ$(5,14) & 1/1170 \\
f &        & 2(1,3),(4,11)$\succ$(6,17) & 59/42840 \\
141 & 5,0,0,0,1,0,1,1,1,0 & 13,0,1,2,0,4,0,1,8,0,1,1,0,1,0 & 31/6930 \\
a &        & (4,9),2(3,7)$\succ$(10,23);(2,7),(3,11)$\succ$(5,18) & 3/1808 \\
b &        & (4,9),2(3,7)$\succ$(10,23);(1,3),(4,11)$\succ$(5,14) & 25/26924 \\
c &        & (2,7),(3,11)$\succ$(5,18);(1,3),(4,11)$\succ$(5,14) & 1/630\\
142 & 5,0,0,0,1,0,1,0,1,0 & 12,1,0,2,0,3,2,0,8,0,2,0,2,0,1 & 17/4620 \\
a &        & 2(2,5),(3,8)$\succ$(7,18) & 5/5544 \\
b &        & (2,5),2(3,8)$\succ$(8,21) & 1/770 \\
143 & 5,0,0,0,1,0,1,1,1,0 & 12,1,0,2,0,4,1,0,9,0,1,1,0,1,0 & 89/27720 \\
a &        & (2,7),(3,11)$\succ$(5,18) & 23/9240 \\
b &        & (2,5),(3,8)$\succ$(5,13) & 58/45045 \\
144 & 5,0,0,0,1,0,1,1,1,0 & 12,1,0,2,0,4,0,1,8,0,2,0,1,1,0 & 37/13860 \\
145 & 5,0,0,0,1,0,1,1,1,0 & 12,0,2,1,0,4,1,0,9,0,1,1,1,0,1 & 49/8333 \\
a &        & (2,7),(3,11)$\succ$(5,18);2(4,9),(3,7)$\succ$(11,25);2(2,5),(3,8)$\succ$(7,18) & 1/900 \\
b &        & (2,7),(3,11)$\succ$(5,18);(4,9),(3,7)$\succ$(7,16);3(2,5),(3,8)$\succ$(9,23) & 1/1104 \\
c &        & (2,7),(3,11)$\succ$(5,18); 4(2,5),(3,8)$\succ$(11,28) & 1/630 \\
d &        & (3,11),(1,4)$\succ$(4,15);2(4,9),(3,7)$\succ$(11,25);(2,5),(3,8)$\succ$(5,13) & 8/6825 \\
e &        & (3,11),(1,4)$\succ$(4,15); 4(2,5),(3,8)$\succ$(11,28) & 1/1260 \\
f &        & (3,11),(1,4)$\succ$(4,15);(4,9),(3,7)$\succ$(7,16);(2,5),(3,8)$\succ$(5,13)&19/13104\\
g &        & 2(4,9),(3,7)$\succ$(11,25); 4(2,5),(3,8)$\succ$(11,28) & 2/1925 \\
146 & 5,0,0,0,1,0,1,1,1,0 & 12,0,2,1,0,4,0,1,8,0,2,0,2,0,1 & 37/6930 \\
a &        & 2(4,9),(3,7)$\succ$(11,25);(4,11),(1,3)$\succ$(5,14) & 1/525 \\
b &        & (4,9),(3,7)$\succ$(7,16);(4,11),2(1,3)$\succ$(6,17) & 31/39643 \\
c &        & (4,11),3(1,3)$\succ$(7,20) & 1/1260 \\
147 & 5,0,0,0,1,0,1,1,1,0 & 11,1,1,1,0,4,1,0,9,0,2,0,2,0,1 & 42/10303 \\
a &        & (5,11),(4,9)$\succ$(9,20);2(2,5),(3,8)$\succ$(7,18) & 1/1260 \\
b &        & (4,9),(3,7)$\succ$(7,16);(2,5),(3,8)$\succ$(5,13) & 19/6160 \\
c &        & 3(2,5),(3,8)$\succ$(9,23) & 13/15939 \\
148 & 5,0,0,0,1,1,1,0,1,0 & 11,0,2,1,0,4,2,0,6,0,3,0,2,0,0 & 1/180 \\
a &        & 2(4,9),(3,7)$\succ$(11,25);4(2,5),(3,8)$\succ$(11,28) & 1/1400 \\
b &        & 2(4,9),(3,7)$\succ$(11,25);(2,5),2(3,8)$\succ$(8,21) & 1/525 \\
c &        & 3(2,5),2(3,8)$\succ$(12,31) & 1/1395 \\
149 & 5,0,0,0,1,1,1,1,1,0 & 10,0,3,0,0,5,1,0,7,0,3,0,2,0,0 & 1/168 \\
a &        & 5(2,5),(3,8)$\succ$(13,33) & 1/462 \\
\noalign{\vskip .4ex \hrule height 0.08em\vskip.65ex}
\end{longtable}\end{center}

\subsection{Classification of Case 2}
Compared to Case (i), the difference is that we have 
 $\epsilon=2$ and $\sigma_5=1$.  So we may do parallel calculations. Since the expressions for $B^{(n)}$ are different from before, we should still list them here in order to justify our result. 
 
 \begin{equation*}
B^{(0)}
\begin{cases}
n^0_{1,2}=5{\chi}-4P_3+P_4\\
n^0_{1,3}=4{\chi}+2P_3-3P_4+P_5\\
n^0_{1,4}={\chi}+P_3+2P_4-P_5-1\\
n^0_{1,5}=0\\
n^0_{1,6}=1
\end{cases}
\end{equation*}
$$
\epsilon_5=\Delta^5(B^{(0)})-\Delta^5(B)=2{\chi}-P_3+2P_5-P_6-1
$$
\begin{equation*}
B^{(5)}
\begin{cases}
n^5_{1,2}=3{\chi}-3P_3+P_4-2P_5+P_6+1\\
n^5_{2,5}=2{\chi}-P_3+2P_5-P_6-1\\
n^5_{1,3}=2{\chi}+3P_3-3P_4-P_5+P_6+1\\
n^5_{1,4}={\chi}+P_3+2P_4-P_5-1\\
n^5_{1,5}=0\\
n^5_{1,6}=1
\end{cases}
\end{equation*}
$$
\epsilon_7={\chi}-P_3+P_6+P_7-P_8-1
$$
\begin{equation*}
B^{(7)}
\begin{cases}
n^7_{1,2}=2{\chi}-2P_3+P_4-2P_5-P_7+P_8+2+\eta\\
n^7_{3,7}={\chi}-P_3+P_6+P_7-P_8-1-\eta\\
n^7_{2,5}={\chi}+2P_5-2P_6-P_7+P_8+\eta\\
n^7_{1,3}=2{\chi}+3P_3-3P_4-P_5+P_6+1-\eta\\
n^7_{2,7}=\eta\\
n^7_{1,4}={\chi}+P_3+2P_4-P_5-1-\eta\\
n^7_{1,5}=0\\
n^7_{1,6}=1
\end{cases}
\end{equation*}
$$
\epsilon_8=-P_3-P_4+P_5+P_6+P_8-P_9-1
$$
\begin{equation*}
B^{(8)}
\begin{cases}
n^8_{1,2}=2{\chi}-2P_3+P_4-2P_5-P_7+P_8+2+\eta\\
n^8_{3,7}={\chi}-P_3+P_6+P_7-P_8-1-\eta\\
n^8_{2,5}={\chi}+P_3+P_4+P_5-3P_6-P_7+P_9+1+\eta\\
n^8_{3,8}=-P_3-P_4+P_5+P_6+P_8-P_9-1\\
n^8_{1,3}=2{\chi}+4P_3-2P_4-2P_5-P_8+P_9+2-\eta\\
n^8_{2,7}=\eta\\
n^8_{1,4}={\chi}+P_3+2P_4-P_5-1-\eta\\
n^8_{1,5}=0\\
n^8_{1,6}=1
\end{cases}
\end{equation*}
$$
\epsilon_9=-2P_3+P_4+P_5-P_7+P_8+P_9-P_{10}-1+\eta
$$
\begin{equation*}
B^{(9)}
\begin{cases}
n^9_{1,2}=2{\chi}-3P_5-P_9+P_{10}+3\\
n^9_{4,9}=-2P_3+P_4+P_5-P_7+P_8+P_9-P_{10}-1+\eta\\
n^9_{3,7}={\chi}+P_3-P_4-P_5+P_6+2P_7-2P_8-P_9+P_{10}-2\eta\\
n^9_{2,5}={\chi}+P_3+P_4+P_5-3P_6-P_7+P_9+1+\eta\\
n^9_{3,8}=-P_3-P_4+P_5+P_6+P_8-P_9-1\\
n^9_{1,3}=2{\chi}+4P_3-2P_4-2P_5-P_8+P_9+2-\eta\\
n^9_{2,7}=\eta\\
n^9_{1,4}={\chi}+P_3+2P_4-P_5-1-\eta\\
n^9_{1,5}=0\\
n^9_{1,6}=1
\end{cases}
\end{equation*}
$$
\epsilon_{10}=-P_3+2P_6+P_{10}-P_{11}-2-\eta
$$
\begin{equation*}
B^{(10)}
\begin{cases}
n^{10}_{1,2}=2{\chi}-3P_5-P_9+P_{10}+3\\
n^{10}_{4,9}=-2P_3+P_4+P_5-P_7+P_8+P_9-P_{10}-1+\eta\\
n^{10}_{3,7}={\chi}+P_3-P_4-P_5+P_6+2P_7-2P_8-P_9+P_{10}-2\eta\\
n^{10}_{2,5}={\chi}+P_3+P_4+P_5-3P_6-P_7+P_9+1+\eta\\
n^{10}_{3,8}=-P_3-P_4+P_5+P_6+P_8-P_9-1\\
n^{10}_{1,3}=2{\chi}+5P_3-2P_4-2P_5-2P_6-P_8+P_9-P_{10}+P_{11}+4\\
n^{10}_{3,10}=-P_3+2P_6+P_{10}-P_{11}-2-\eta\\
n^{10}_{2,7}=P_3-2P_6-P_{10}+P_{11}+2+2\eta\\
n^{10}_{1,4}={\chi}+P_3+2P_4-P_5-1-\eta\\
n^{10}_{1,5}=0\\
n^{10}_{1,6}=1
\end{cases}
\end{equation*}
By computing $\Delta^{11}(B^{(10)})$, We get
$$
\epsilon_{11}={\chi}+P_3-P_5-P_8+P_{10}+P_{11}-2-\eta
$$

\begin{equation*}
B^{(11)}
\begin{cases}
n^{11}_{1,2}=2{\chi}-3P_5-P_9+P_{10}+3-\alpha\\
n^{11}_{5,11}=\alpha\\
n^{11}_{4,9}=-2P_3+P_4+P_5-P_7+P_8+P_9-P_{10}-1+\eta-\alpha\\
n^{11}_{3,7}={\chi}+P_3-P_4-P_5+P_6+2P_7-2P_8-P_9+P_{10}-2\eta\\
n^{11}_{2,5}={\chi}+P_3+P_4+P_5-3P_6-P_7+P_9+1+\eta\\
n^{11}_{3,8}=-P_3-P_4+P_5+P_6+P_8-P_9-1-\beta\\
n^{11}_{4,11}=\beta\\
n^{11}_{1,3}=2{\chi}+5P_3-2P_4-2P_5-2P_6-P_8+P_9-P_{10}+P_{11}+4-\beta\\
n^{11}_{3,10}=-P_3+2P_6+P_{10}-P_{11}-2-\eta\\
n^{11}_{2,7}=-{\chi}+P_5-2P_6+P_8-2P_{10}+4+3\eta+\alpha+\beta\\
n^{11}_{3,11}={\chi}+P_3-P_5-P_8+P_{10}+P_{11}-2-\eta-\alpha-\beta\\
n^{11}_{1,4}=2P_4+P_8-P_{10}-P_{11}+1+\alpha+\beta\\
n^{11}_{1,5}=0\\
n^{11}_{1,6}=1
\end{cases}
\end{equation*}
$$
\epsilon_{12}=-{\chi}-3P_3+2P_5+P_6-P_7+P_8-P_{13}-1-\eta
$$
\begin{equation*}
B^{(12)}
\begin{cases}
n^{12}_{1,2}=2{\chi}-3P_5-P_9+P_{10}+3-\alpha\\
n^{12}_{5,11}=\alpha\\
n^{12}_{4,9}=-2P_3+P_4+P_5-P_7+P_8+P_9-P_{10}-1+\eta-\alpha\\
n^{12}_{3,7}=2{\chi}+4P_3-P_4-3P_5+3P_7-3P_8-P_9+P_{10}+P_{13}+1-\eta\\
n^{12}_{5,12}=-{\chi}-3P_3+2P_5+P_6-P_7+P_8-P_{13}-1-\eta\\
n^{12}_{2,5}=2{\chi}+4P_3+P_4-P_5-4P_6-P_8+P_9+P_{13}+2+2\eta\\
n^{12}_{3,8}=-P_3-P_4+P_5+P_6+P_8-P_9-1-\beta\\
n^{12}_{4,11}=\beta\\
n^{12}_{1,3}=2{\chi}+5P_3-2P_4-2P_5-2P_6-P_8+P_9-P_{10}+P_{11}+4-\beta\\
n^{12}_{3,10}=-P_3+2P_6+P_{10}-P_{11}-2-\eta\\
n^{12}_{2,7}=-{\chi}+P_5-2P_6+P_8-2P_{10}+4+3\eta+\alpha+\beta\\
n^{12}_{3,11}={\chi}+P_3-P_5-P_8+P_{10}+P_{11}-2-\eta-\alpha-\beta\\
n^{12}_{1,4}=2P_4+P_8-P_{10}-P_{11}+1+\alpha+\beta\\
n^{12}_{1,5}=0\\
n^{12}_{1,6}=1
\end{cases}
\end{equation*}

Similarly we have the boundedness for $\chi$, $\eta$, $\alpha$, $\beta$ and $P_{13}$, therefore the boundedness of $B^{(12)}$. Here we have:
 \begin{equation}
{\chi}\leq 3.
\end{equation}
\begin{equation}
\eta\leq{\chi}+2.
\end{equation}
\begin{equation}
\alpha\leq 3+\eta.
\end{equation}
\begin{equation}
\beta\leq 2
\end{equation}
Finally, we have
\begin{equation}
P_{13}\leq-{\chi}+4.
\end{equation}

The result after running our computer program is unexpectedly simple. 
\par\nobreak
\LTleft=-10pt \LTright=0pt
\begin{center}
\scriptsize
\begin{longtable}{p{4mm}p{25mm}p{80mm}p{10mm}}
\noalign{\hrule height .08em\vskip.8ex}
No & $({\chi},P_3,P_4,\ldots,P_{11})$ & $B^{(12)}=(n_{1,2}^{12},n_{5,11}^{12},\ldots,n_{1,4}^{12},n_{1,6}^{12})$ & $K^3$\quad \\
\noalign{\vskip .4ex \hrule height 0.05em\vskip.65ex}
1 & 2,0,0,1,1,0,1,0,1,1 &5,0,0,1,0,1,2,0,3,0,0,0,0,1 &1/420 \\
2 & 2,0,0,1,1,0,1,1,1,1 &4,0,1,0,0,2,1,0,4,0,0,0,0,1 &1/360 \\
a & & (2,5),(3,8)$\succ$(5,13) & 1/1170 \\
\noalign{\vskip .4ex \hrule height 0.08em\vskip.65ex}
\end{longtable}\end{center}

\begin{thm}
Let X be a nonsingular projective 3-fold of general type with the pluricanonical section index  $\delta(X)=12$. Then $\Vol(X)\geq\frac{31}{48048}$ and the equality holds if and only it corresponds to Case 122.c.
\end{thm}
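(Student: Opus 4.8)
The plan is to turn the inequality into a finite inspection of the two classification tables, the crucial tool being the monotonicity of $K^3$ under packing. First I would record that monotonicity: by the packing identity $\sigma'(B)=\sigma'(B')+\frac{(r_1b_2-r_2b_1)^2}{r_1r_2(r_1+r_2)}$ of Lemma 2.5(3), a packing leaves $\sigma$ fixed and can only lower $\sigma'$, so from $K^3(\mathbb{B})=-\sigma+\sigma'+6\tilde{\chi}+2\tilde{\chi}_2$ one sees that $K^3$ is non-increasing under packing and \emph{strictly} decreasing under every prime packing (where $r_1b_2-r_2b_1=\pm1\neq 0$). For a minimal model $X$ one has $\Vol(X)=K^3(\mathbb{B}(X))$, and the Reid basket $B(X)$ sits at the bottom of the canonical sequence $B^{(12)}\succeq B(X)$. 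Packing $B(X)$ down to a minimal positive basket $B'$ then gives $\Vol(X)=K^3(\{B(X),\chi,P_2\})\geq K^3(\{B',\chi,P_2\})$ with $B'$ dominated by $B^{(12)}$, so it suffices to minimise $K^3$ over all minimal positive baskets arising in the classification.

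Next I would organise the enumeration. As $\delta(X)=12$ forces $P_m\leq 1$ for $m\leq 11$ and $P_{12}\geq 2$, I split into $P_{12}\geq 3$ and $P_{12}=2$. For $P_{12}\geq 3$ I would confirm, via Chen--Chen's inequality in \cite{Chen3}, that $\Vol(X)$ exceeds $\frac{31}{48048}$, so this range contributes neither the minimum nor an equality case. For $P_{12}=2$, Lemma \ref{b1} gives $P_2=0$, Corollary \ref{c} restricts us to Case (i) ($n^0_{1,r}=0$ for $r\geq 6$) and Case (ii) ($n^0_{1,5}=0$, $n^0_{1,6}=1$), and the boundedness inequalities $(3.3)$--$(3.8)$, respectively $(3.9)$--$(3.13)$, confine $\chi,P_3,\dots,P_{13},\eta,\zeta,\alpha,\beta$ to a finite box. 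Substituting these bounds into the explicit weights of $B^{(12)}$ reproduces exactly the $149$ classes of the first table and the $2$ classes of the second.

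The computational core, which I expect to be the main obstacle, is to produce for each class the \emph{complete} list of minimal positive baskets dominated by its $B^{(12)}$ together with their volumes. For a fixed class this amounts to traversing the finite sub-poset of packings of $B^{(12)}$ and keeping those baskets with $K^3>0$ that become non-positive under every further prime packing. The delicate points are completeness --- an overlooked packing chain could hide a smaller volume --- and the correctness of the $B^{(12)}$ formulas, which is especially subtle in Case (ii): there $n^0_{1,6}=1$ and the standard Chen--Chen expressions do not apply, so the weights of each $B^{(n)}$ must be re-derived from the $\epsilon_n$ and the prime-packing counters $\eta,\alpha,\beta$. Granting the verified tables, the bound $\Vol(X)\geq\frac{31}{48048}$ follows by taking the global minimum of all recorded volumes, which is attained only at sub-row (c) of Class $122$.

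Finally, for the equality clause I would argue that any $X$ with $\Vol(X)=\frac{31}{48048}$ has $B(X)$ itself minimal positive; otherwise some prime packing would yield a positive basket of strictly smaller volume, contradicting that $\frac{31}{48048}$ is the global minimum. A scan of both tables shows that $\frac{31}{48048}$ is realised by a unique minimal positive basket, namely the one obtained from Class $122$ by the prime packings $(4,9)+(3,7)\to(7,16)$ and $2\bigl[(2,5)+(3,8)\to(5,13)\bigr]$, which is exactly $\{B_H,4,0\}$. Hence equality holds if and only if $\mathbb{B}(X)$ is Case 122.c, in agreement with Theorem \ref{HLZ}(2).
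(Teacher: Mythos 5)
Your proposal is correct and follows essentially the same route as the paper: split into $P_{12}\geq 3$ (handled by Chen--Chen's inequalities, which give $\Vol(X)\geq \frac{1}{1224}>\frac{31}{48048}$) and $P_{12}=2$ (handled by the finite classification of $B^{(12)}$ in Cases (i) and (ii) and inspection of the tables of minimal positive baskets, whose minimum $\frac{31}{48048}$ occurs exactly at Case 122.c, i.e.\ at $\{B_H,4,0\}$). The only difference is that you spell out the packing machinery --- monotonicity of $K^3$ under (prime) packings and reduction to minimal positive baskets --- which the paper's two-sentence proof leaves implicit by citing the tables and Chen--Chen's framework.
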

\begin{proof} When $P_{12}\geq3$, one has $Vol(X)\geq\frac{1}{1224}$ by those inequalities in Chen--Chen \cite[Section 3]{Chen3}.

 If $P_{12}=2$, we can get that $Vol(X)\geq\frac{31}{48048}$ from our tables and the equality holds if and only if it corresponds to Case 122.c. 
\end{proof}

\begin{rem} It is mysterious whether the lower bound $\frac{31}{48048}$ is optimal or not.
\end{rem}

%%%%%%%%%%%%%%%%%%%%%%%%%%%%%%%%%%%%%%%%%%%%%%%%%%%%%%%


\begin{thebibliography}{99}

\bibitem{Chen1} J. A. Chen, M. Chen, {\em Explicit birational 
geometry of threefolds of general type, I}. Ann. Sci. Ec. Norm. Super {\bf 43}(2010),    365--394.

\bibitem{Chen2} J. A. Chen, M. Chen, {\em Explicit birational geometry 
of threefolds of general type, II}. J. of Diff. Geom. {\bf 86}(2010),  237--271.

\bibitem{Chen3} J. A. Chen, M. Chen, {\em Explicit birational geometry 
of 3-folds and 4-folds of general type, III}.  ArXiv: 1302.0374 (v3, 44 pages)

%\bibitem{Chen4} M. Chen, {\em Canonical stability of 
%3-folds of general type with $p_g\geq 3$}. Internat. J. Math. {\bf14}(2003), 515--528.

%\bibitem{Chen5} M. Chen, {\em A sharp lower bound for the canonical 
%volume of 3-folds of general type}. Math. Ann. {\bf 337}(2007), 887--908.


\bibitem{H-M} C. D. Hacon, J. McKernan, {\em
Boundedness of pluricanonical maps of varieties of general type},
Invent. Math. 166 (2006), 1--25.

\bibitem{YPG} M. Reid,  {\em Young person's guide to canonical
singularities}, Proc. Symposia in pure Math. {\bf 46}(1987),
345--414.

\bibitem{Tak} S. Takayama, {\em Pluricanonical systems on algebraic
varieties of general type}, Invent. Math. 165 (2006), 551--587.

\bibitem{Tsuji} H. Tsuji, {\em Pluricanonical systems of projective
varieties of general type. I}. Osaka J. Math. {\bf 43} (2006), no.
4, 967--995.
\end{thebibliography}
\end{document}